\numberwithin{equation}{section}
\newtheorem{theorem}{Theorem}[section]
\newtheorem{lemma}[theorem]{Lemma}
\newtheorem{corollary}[theorem]{Corollary}
\theoremstyle{definition}
\newtheorem{definition}[theorem]{Definition}
\newtheorem{remark}[theorem]{Remark}
\newtheorem{conjecture}[theorem]{Conjecture}
\numberwithin{equation}{section}
\def\N{\mathbb{N}}
\def\R{\mathbb{R}}
\renewcommand{\phi}{\varphi}
\renewcommand{\epsilon}{\varepsilon}
\newcommand{\Prp}[2]{\mathbb{P}_{#1}\left(#2\right)} 
\newcommand{\Exp}[2]{\mathbb{E}_{#1}\left[#2\right]} 
\newcommand{\Prpwo}[1]{\mathbb{P}_{#1}} 
\newcommand{\1}{{\text{\Large $\mathfrak 1$}}}
\newcommand{\thit}{t_{\mathrm{hit}}}
\newcommand{\trel}{t_{\mathrm{rel}}}
\newcommand{\pr}[1]{\mathbb{P}\!\left(#1\right)}
\newcommand{\E}[1]{\mathbb{E}\!\left[#1\right]}
\newcommand{\estart}[2]{\mathbb{E}_{#2}\!\left[#1\right]}
\newcommand{\prstart}[2]{\mathbb{P}_{#2}\!\left(#1\right)}
\newcommand{\prcond}[3]{\mathbb{P}_{#3}\!\left(#1\;\middle\vert\;#2\right)}
\newcommand{\escond}[3]{\mathbb{E}_{#3}\!\left[#1\;\middle\vert\;#2\right]}
\newcommand{\tn}{|\kern-.1em|\kern-0.1em|}
\def\Mbad{M^{\rm bad}}
\def\Mgood{M^{\rm good}}
\newcommand\be{\begin{equation}}
\newcommand\ee{\end{equation}}
\def\eps{\varepsilon}
\newcommand{\thmref}[1]{Theorem~\ref{thm:#1}} 
\newcommand{\lemref}[1]{Lemma~\ref{lem:#1}} 
\newcommand{\corref}[1]{Corollary~\ref{cor:#1}} 
\newcommand{\eqnref}[1]{(\ref{eq:#1})} 
\newcommand{\ignore}[1]{}
\begin{document}
\title{\bf Random walks colliding before getting trapped}

\author{
Louigi Addario-Berry\thanks{McGill University, Montr\'eal, Canada; louigi.addario@mcgill.ca} \and Roberto I. Oliveira\thanks{IMPA, Rio de Janeiro, Brazil; rimfo@impa.br. Supported by a {\em Bolsa de Produtividade em Pesquisa} from CNPq.} \and
Yuval Peres\thanks{Microsoft Research, Redmond, Washington, USA; peres@microsoft.com} \and Perla Sousi\thanks{University of Cambridge, Cambridge, UK;   p.sousi@statslab.cam.ac.uk}
}
\date{}
\maketitle
\begin{abstract}
Let $P$ be the transition matrix of a finite, irreducible and reversible Markov chain. We say the continuous time Markov chain $X$ has transition matrix $P$ and speed~$\lambda$ if it jumps at rate~$\lambda$ according to the matrix $P$. Fix $\lambda_X,\lambda_Y,\lambda_Z\geq 0$, then let $X,Y$ and $Z$ be independent Markov chains with transition matrix $P$ and speeds $\lambda_X,\lambda_Y$ and $\lambda_Z$ respectively, all started from the stationary distribution. What is the chance that $X$ and $Y$ meet before either of them collides with $Z$? 
For each choice of $\lambda_X,\lambda_Y$ and $\lambda_Z$ with $\max(\lambda_X,\lambda_Y)>0$, we prove a lower bound for this probability which is uniform over all transitive, irreducible and reversible chains. In the case that $\lambda_X=\lambda_Y=1$ and $\lambda_Z=0$ we prove a strengthening of our main theorem using a martingale argument. We provide an example showing the transitivity assumption cannot be removed for general $\lambda_X,\lambda_Y$ and $\lambda_Z$.
\newline
\newline
\emph{Keywords and phrases.}  Transitive Markov chains, mixing time, hitting times, martingale.
\newline
MSC 2010 \emph{subject classifications.} Primary 60J10.
\end{abstract}

\section{Introduction}\label{sec:intro}

Consider three independent random walks $X,Y,Z$ over the same finite connected graph. What is the probability that $X,Y$ meet at the same vertex before either of them meets $Z$? If the initial distributions of the three walkers are the same, this probability is at least $1/3$ by symmetry, at least if we assume that ties (i.e.~triple meetings) are broken symmetrically. 

Now consider a similar problem where the initial states $X_0,Y_0,Z_0$ are all sampled independently from the same distribution, but $Z$ stays put while $X$ and $Y$ move. What is the probability that $X$ and $Y$ meet before hitting $Z$? 

There are several examples of bounds~\cite{AldFill, Oliveira, SousiWinkler} relating the meeting time of two random walks to the hitting time of a fixed vertex by a single random walk. These typically provide upper bounds for meeting times in terms of worst-case or average hitting times, sometimes up to constant factors. In light of this,
it seems natural to conjecture that the probability in question is at least $1/3$. However, the previous argument by symmetry fails. In fact, to the best of our knowledge, no known universal lower bound for this probability is known.

It will be convenient to consider the problem in continuous time. 
For the remainder of the paper let $P$ be the transition matrix of an irreducible and reversible Markov chain on a finite state space with stationary distribution $\pi$. 
Let $X$ and $Y$ be two independent continuous time Markov chains that jump at rate~$1$ according to the transition matrix $P$ and let $Z\sim \pi$ be independent of~$X$ and $Y$. 

We define $M^{X,Y}$ to be the first time $X$ and $Y$ meet, i.e.
\[
M^{X,Y} = \inf\{t\geq 0: \,X_t=Y_t\}.
\]
We also define:
\[M^{W,Z}=\inf\{t\geq 0:\, X_t=Z\}\,\,(W\in\{X,Y\}).\] We write $\Mgood=M^{X,Y}$ and $\Mbad=M^{X,Z}\wedge M^{Y,Z}$. 

\subsection{Main results}

Our first result proves a universal lower bound on the probability $\pr{\Mgood<\Mbad}$ for the class of transitive chains. First we recall the definition.
\begin{definition}\rm{
Fix a chain with transition matrix $P$ and state space $E$. An {\em automorphism} of $P$ is a bijection $\phi:E\to E$ such that $P(z,w) = P(\phi(z),\phi(w))$ for all $z,w$. The chain $P$ is {\em transitive} if for all $x,y \in E$ there exists an automorphism $\phi$ of $P$ with $\phi(x)=y$.
}
\end{definition}

\begin{theorem}\label{thm:put}
Let $P$ be the transition matrix of a finite irreducible and reversible chain with two or more states. Assume $X_0$ and $Y_0$ are independent with law $\pi$. If $P$ is transitive, then
\[
\pr{\Mgood<\Mbad}\geq \frac{1}{4}.
\]
\end{theorem}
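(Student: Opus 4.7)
My plan is a three-stage reduction culminating in a martingale argument on the pair chain.

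\textbf{Step 1 (deterministic target).} For $z \in E$, write $T^W_z := \inf\{t \geq 0 : W_t = z\}$ and set $g(z) := \pr{M^{X,Y} < T^X_z \wedge T^Y_z}$. Conditioning on $Z$ gives $\pr{\Mgood < \Mbad} = \sum_z \pi(z) g(z)$. If $\phi$ is an automorphism of $P$ with $\phi(z) = z'$, then $(\phi X, \phi Y)$ has the same joint distribution as $(X, Y)$ (since $\pi$ is uniform on $E$ and invariant under $\phi$), and $T^{\phi X}_{\phi(z)} = T^X_z$; hence $g(z) = g(z')$. So $g$ is constant, and $\pr{\Mgood < \Mbad} = g(z_0)$ for an arbitrary fixed $z_0 \in E$.

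\textbf{Step 2 (visited-set reformulation).} Set $\tau = M^{X,Y}$ and $V_\tau := \{X_s : 0 \leq s \leq \tau\} \cup \{Y_s : 0 \leq s \leq \tau\}$. The event $\{M^{X,Y} < T^X_{z_0} \wedge T^Y_{z_0}\}$ coincides with $\{z_0 \notin V_\tau\}$, and since $\pi$ is uniform and $\pr{v \in V_\tau}$ is independent of $v$ (again by transitivity), we have $\pr{z_0 \in V_\tau} = \Ex{|V_\tau|}/|E|$. The theorem is therefore equivalent to the bound $\Ex{|V_\tau|} \leq \tfrac{3}{4}|E|$.

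\textbf{Step 3 (pair-chain martingale and main obstacle).} I would then consider the pair chain $(X_t, Y_t)$ on $E \times E$ and the hitting probability $h(x, y) := \pr{M^{X,Y} < T^X_{z_0} \wedge T^Y_{z_0} \mid (X_0, Y_0) = (x, y)}$, which is harmonic for the pair chain off $\Delta \cup R_{z_0}$, where $\Delta$ is the diagonal and $R_{z_0} := (\{z_0\}\times E) \cup (E \times \{z_0\})$; it equals $1$ on $\Delta \setminus R_{z_0}$ and $0$ on $R_{z_0}$. Thus $h(X_t, Y_t)$ is a bounded martingale stopped at $\tau \wedge T^X_{z_0} \wedge T^Y_{z_0}$. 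The pair chain is invariant under $\phi \times \phi$ for each automorphism $\phi$ of $P$ and under the swap $(x, y) \mapsto (y, x)$, both of which preserve $\Delta \cup R_{z_0}$; these symmetries together with reversibility should pin down enough structure on $h$ to give $\sum_{x, y} \pi(x)\pi(y) h(x, y) \geq 1/4$. The delicate point---and the main obstacle---is extracting the explicit constant: a naive inclusion-exclusion gives only $\pr{\Mgood < \Mbad} \geq 1 - 2\,\pr{T^X_{z_0} \leq \tau}$, which is vacuous because $\pr{T^X_{z_0} \leq \tau}$ can approach $1/2$ (e.g.~on long cycles), so the argument must genuinely exploit positive correlation between $\{T^X_{z_0} > \tau\}$ and $\{T^Y_{z_0} > \tau\}$, or build a supermartingale that couples the two ``bad'' races together on the pair chain.
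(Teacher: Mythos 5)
Your Steps 1 and 2 are correct reductions (conditioning on $Z=z$ and using transitivity to fix a single target $z_0$, then rephrasing the event as $z_0$ avoiding the union of the two ranges up to the meeting time), but they only restate the problem; the entire quantitative content of the theorem is deferred to Step 3, and Step 3 does not contain a proof. The function $h(x,y)$ you introduce is precisely the harmonic extension of the probability you are trying to bound, so optional stopping applied to $h(X_t,Y_t)$ returns a tautology, and you exhibit no mechanism by which the automorphism and swap symmetries would force $\sum_{x,y}\pi(x)\pi(y)h(x,y)\ge 1/4$. You correctly diagnose that the union bound is vacuous and that some positive-correlation or coupling input is needed, but you neither identify nor prove it --- so as written this is a gap, not a proof.

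The missing idea is to run the martingale argument with the \emph{expected hitting time} potential rather than with hitting probabilities. Set $f(a,b):=\estart{\tau^X_b}{a}$ and $G_t:=f(X_t,z)+f(Y_t,z)-f(X_t,Y_t)$. By \lemref{AF}, each of $f(X_t,z)+t$, $f(Y_t,z)+t$ and $f(X_t,Y_t)+2t$ is a martingale up to $S=M^{X,Y}\wedge\tau^X_z\wedge\tau^Y_z$, and in the combination $G_t$ the drift terms cancel ($t+t-2t=0$), so $G$ is a bounded martingale up to $S$. Transitivity enters twice: first, $\estart{\tau^X_y}{\pi}$ is independent of $y$, which gives $\E{G_0}=t^{*}_{\rm hit}$; second, the symmetry $\estart{\tau_z}{x}=\estart{\tau_x}{z}$ from \lemref{knownfacts} forces $G_S=0$ on the event that $X$ or $Y$ reaches $z$ first, while $G_S=2\estart{\tau^X_z}{X_S}\le 2t_{\rm hit}$ on the event $\{\Mgood<\tau^X_z\wedge\tau^Y_z\}$. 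Optional stopping then yields $t^{*}_{\rm hit}\le 2t_{\rm hit}\,\pr{\Mgood<\Mbad}$, and the standard inequality $t_{\rm hit}\le 2t^{*}_{\rm hit}$ delivers the constant $1/4$. This is exactly the coupling of the two ``bad races'' you were looking for, but it lives at the level of expected hitting times, where the cancellation is exact, rather than at the level of hitting probabilities.
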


Next we consider a more general setup. We say that a random walk $W$ has speed $\lambda_W$ and transition matrix $P$, if it jumps at rate $\lambda_W$ according to the matrix $P$.

Suppose again that $P$ is the transition matrix of an irreducible and reversible Markov chain on a finite state space with stationary distribution $\pi$. Let~$\lambda_X=1, 0\leq \lambda_Y\leq 1$ and $0\leq \lambda_Z<\infty$. Let $X,Y$ and $Z$ be three independent continuous time Markov chains with speeds $\lambda_X, \lambda_Y$ and $\lambda_Z$ respectively and transition matrix~$P$. 

For the remainder of the paper, we write $\mathbb{P}$ for the probability measure under which $X_0, Y_0$ and~$Z_0$ are independent with law $\pi$. We also write $\mathbb{P}_{a,b,c}$ in the case when $(X_0, Y_0, Z_0)=(a,b,c)$. For computations that only involve 
two chains we drop one index writing only $\mathbb{P}_{a,b}$; which two chains are involved will always be clear from context. Likewise, we write $\mathbb{P}_a$ when only one chain is involved. We define $M^{X,Y}$ as above and redefine:
\[M^{W,Z}=\inf\{t\geq 0:\, X_t=Z_t\}\,\,(W\in\{X,Y\}).\] $\Mgood=M^{X,Y}$ and $\Mbad=M^{X,Z}\wedge M^{Y,Z}$ are defined as before. Again we are interested in uniform lower bounds on the probability  of the event $\{\Mgood<\Mbad\}$ that have good dependence on the three speeds.

\begin{theorem}\label{thm:redmond}
There exists $c>0$ such that the following holds. 
Let $P$ be the transition matrix of a transitive and reversible chain with stationary distribution $\pi$ and at least two states. Suppose that $X,Y$ and $Z$ are three independent continuous time Markov chains with speeds $\lambda_X=1, \lambda_Y\leq 1$ and $0\leq \lambda_Z<\infty$ and transition matrix $P$ started from $\pi$. Then 
\[
\pr{\Mgood<\Mbad}\geq \frac{c}{(\sqrt{1+\lambda_Z} + \sqrt{\lambda_Y+\lambda_Z})^2}\, .
\]
\end{theorem}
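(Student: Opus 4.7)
The shape $c/(\sqrt{1+\lambda_Z}+\sqrt{\lambda_Y+\lambda_Z})^2$ of the bound strongly suggests a Paley--Zygmund/second moment argument in which two ``bad collision'' variances combine via Cauchy--Schwarz. The plan is to work with a functional that counts ``good'' meeting time in $[0,t]$ and to leverage transitivity and reversibility to reduce pair meeting times to hitting times of a single walk. As a sanity check, in the base case $\lambda_Y=1,\lambda_Z=0$ the bound reduces to $c/4$, consistent with the $1/4$ of \thmref{put} for a suitable absolute constant $c$.

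First I would express the three marginal meeting distributions through a single scalar function. Using transitivity and reversibility, for two independent walks $W_1,W_2$ with speeds $a,b$ started from $\pi$ the meeting time $M^{W_1,W_2}$ has the same distribution as $H_v/(a+b)$, where $H_v$ is the hitting time of a fixed vertex $v$ by a speed-$1$ walk from $\pi$. Writing $F(r):=\pr{H_v\le r}$, the spectral decomposition $1-F(r)=\sum_k c_k e^{-\mu_k r}$ with $c_k\ge 0$ shows that $F$ is concave on $(0,\infty)$ with a jump of size $|V|^{-1}$ at $0$. Next, introduce the good meeting time
\[
I_t=\int_0^t \1{X_s=Y_s}\,\1{\Mbad>s}\,ds.
\]
Since $\{I_t>0\}\subseteq\{\Mgood<\Mbad\}$, Paley--Zygmund gives $\pr{\Mgood<\Mbad}\ge (\E{I_t})^2/\E{I_t^2}$. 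The first moment is controlled by stationarity ($\pr{X_s=Y_s}=|V|^{-1}$) together with an estimate on $\pr{\Mbad>s\mid X_s=Y_s}$ obtained by conditioning on the meeting trajectory and using the stationarity of $Z$. The second moment unfolds into a double time integral; decomposing $\Mbad=M^{X,Z}\wedge M^{Y,Z}$ produces two variance contributions, one controlled by the CDF of a speed-$(1+\lambda_Z)$ walk and one by a speed-$(\lambda_Y+\lambda_Z)$ walk. Applying Cauchy--Schwarz in the form $\sqrt{\var(U_1+U_2)}\le \sqrt{\var(U_1)}+\sqrt{\var(U_2)}$ to these two counts produces the factor $(\sqrt{1+\lambda_Z}+\sqrt{\lambda_Y+\lambda_Z})^2$ in the denominator. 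Optimizing over $t$ then cancels any chain-dependent quantity (such as $\thit$) between the two moments, leaving only the speed-dependent ratio.

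The main obstacle is making the second-moment estimate uniform over all transitive reversible chains with arbitrary spectra. The three pair meeting times are strongly coupled through the shared walks $X,Y,Z$, so they cannot be treated as independent; however, the spectral representation of $1-F$ as a nonnegative mixture of exponentials allows one to reduce joint probabilities to integrals of single-walk quantities, at which point Cauchy--Schwarz provides the decoupling without losing the scaling. A further subtlety is the $|V|^{-1}$ atom of $F$ at $0$, which must be tracked carefully, since it corresponds to the trivial ``simultaneous start'' contribution and must be separated from the bulk in order not to introduce chain-dependent error terms in the final bound.
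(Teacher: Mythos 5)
Your skeleton is essentially the paper's: both work with the diagonal occupation time $T=\int_0^{\infty}\1{\{X_s=Y_s,\,s<\Mbad\}}\,ds$, write $\pr{\Mgood<\Mbad}$ as a ratio of a first moment to a "size-biased" quantity, and use the distributional identity $M^{W_1,W_2}\overset{d}{=}\tau_z/(a+b)$ for transitive reversible chains. Your upper-bound half is a genuine variant: the paper controls $\E{T\mid T>0}$ by an occupation-time-at-a-stopping-time identity for the product chain $(X,Y)$ observed by $Z$ (its Lemmas 4.1--4.2), whereas your Paley--Zygmund route would control $\E{I_t^2}/\E{I_t}$ by the return probability of the speed-$(1+\lambda_Y)$ difference chain to the diagonal, i.e.\ by $\int_0^t(p_r(x,x)-\tfrac1n)\,dr\le \pi(x)\estart{\tau_x}{\pi}=t^{*}_{\rm hit}/n$. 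That substitution is sound and arguably more elementary. Note also that the distributional identity you invoke is not off-the-shelf: the paper devotes Lemma 3.6 to it (an ODE-uniqueness argument hinging on $\prstart{\tau_z\le t}{x}=\prstart{\tau_x\le t}{z}$ for transitive chains) and remarks that it appears to be new.

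The genuine gap is in the first-moment lower bound, which is where the factor $(\sqrt{1+\lambda_Z}+\sqrt{\lambda_Y+\lambda_Z})^{-2}$ actually has to come from. You attribute it to a Cauchy--Schwarz inequality on variances inside the second moment; that mechanism does not produce it. In a correct execution one needs, for $s\le\xi$,
\begin{equation*}
\pr{\Mbad>s\mid X_s=Y_s}\;\ge\;1-\pr{M^{X,Z}\le s\mid\cdot}-\pr{M^{Y,Z}\le s\mid\cdot}\;\ge\;1-6\bigl(\sqrt{1+\lambda_Z}+\sqrt{\lambda_Y+\lambda_Z}\bigr)\sqrt{s/\thit},
\end{equation*}
and then $\xi\asymp\thit/(\sqrt{1+\lambda_Z}+\sqrt{\lambda_Y+\lambda_Z})^2$ is forced; the two square roots enter via a union bound and the identity reducing each meeting time to $\tau_z$ at the combined speed. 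The real work is the uniform small-time estimate $\frac{1}{\pi(A_x)}\sum_{z\in A_x}\pi(z)\prstart{\tau_z\le\theta\,\thit}{x}\le 6\sqrt{\theta}$, and this does \emph{not} follow from concavity (complete monotonicity) of $F$ plus $\int_0^\infty(1-F)=t^{*}_{\rm hit}$: a completely monotone tail such as $\tfrac12 e^{-r/\eps}+\tfrac12 e^{-r/(2t^{*}_{\rm hit})}$ with $\eps$ tiny is consistent with those constraints yet has $F(\theta\,\thit)\approx\tfrac12$ for arbitrarily small $\theta$. The paper rules this out by combining the Aldous--Brown quasistationary bound $\estart{\tau_z}{\alpha}\le\estart{\tau_z}{\pi}+\trel$ with a separate eigenfunction argument (its Lemma 3.3) handling the regime $\trel\ge\sqrt{\theta}\,\thit$, and crucially the bound only holds after averaging $z$ over a set $A_x$ of stationary measure at least $1/2$, not for worst-case $z$. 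Your sketch needs this entire ingredient, and without it the optimization over $t$ cannot yield a chain-independent constant.
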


The proof shows that we may take $c=1/4752$, 
which implies a version of Theorem~\ref{thm:put} with $1/4$ replaced by $1/((\sqrt{2}+1)^2\cdot 4752)$. The constant $c$ most likely can be improved, but the dependence of the lower bound on $\lambda_Z$ is sharp when $\lambda_Z\nearrow +\infty$. Indeed, if $P$ is simple random walk over a large complete graph with $n$ vertices, then
$$\pr{\Mgood\leq \Mbad}= \frac{1+\lambda_Y}{2(1+\lambda_Y+\lambda_Z)} - O\left({\frac{1}{n}}\right),$$
where the term $O(1/n)$ corresponds to the possibility of meetings at time $0$. 

It is natural to ask if our theorems can be extended to all transitive chains. The next theorem shows that the answer is no for the more general Theorem \ref{thm:redmond}. The theorem essentially asserts that there are graphs where typical meeting times are much smaller than typical hitting times.
\begin{theorem}\label{thm:counter}
For all $\epsilon>0$ there exists a finite connected graph $G$ such that if $P$ corresponds to simple random walk on $G$ and $\lambda_X=1, \lambda_Y=0$ and $\lambda_Z=1$, then
$\pr{\Mgood\leq \Mbad}<\epsilon$.
\end{theorem}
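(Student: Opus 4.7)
\textbf{Proof plan for Theorem~\ref{thm:counter}.} The strategy is to exhibit a graph $G$ on which the meeting time of two stationary moving walks is much smaller than the typical hitting time of a $\pi$-random target. On such a graph the moving ``trap'' $Z$ will collide with $X$ well before $X$ can reach the fixed target $Y$, so $\mathbb{P}(\Mgood \leq \Mbad)$ is forced to be small.

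Given $\epsilon>0$, I would take $G = G_n$ to be the graph on $n+n^3$ vertices obtained by attaching $n^2$ pendant leaves to each vertex $u_1,\dots,u_n$ of a clique $K_n$, with $n$ chosen large in terms of $\epsilon$. A direct computation gives $\pi(u_i) \sim 1/(2n)$ for each clique vertex, $\pi(\ell) \sim 1/(2n^3)$ for each leaf, and hence $\|\pi\|_2^2 \sim 1/(4n)$. The single-walk mixing time on $G_n$ is $O(n)$: viewed at the clique level the walk performs a ``super-walk'' on $K_n$ whose super-steps have duration $\Theta(n)$ (the probability per jump of escaping the cluster $\{u_i\}\cup\{\text{pendants of }u_i\}$ is $\Theta(1/n)$), and this super-walk mixes in $O(1)$ super-steps since it transitions to a near-uniform neighbour on $K_n$ in one step.

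The heart of the argument consists of two estimates. First, $\mathbb{E}[M^{X,Z}] = O(n)$: once mixed, the two walks collide at rate $\|\pi\|_2^2 \sim 1/(4n)$, and a second-moment argument on the time spent on the diagonal converts this into the meeting-time bound. Second, for every $v\in V(G_n)$, $\mathbb{E}_\pi[\tau_v]=\Omega(n^2)$: reaching a specified clique vertex requires $\Theta(n)$ super-steps each of duration $\Theta(n)$, while hitting a specified leaf is even slower (of order $n^3$). Combined with the fast mixing above, a standard Aldous--Brown type exponential hitting-time estimate yields
\[
\mathbb{P}_\pi(\tau_v \leq T) \leq C T/n^2 \qquad \text{for all $v$ and all $T \leq n^{3/2}$.}
\]

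Finally, using $\{\Mgood \leq \Mbad\} \subseteq \{M^{X,Z}>T\}\cup\{M^{X,Y}\leq T\}$ and choosing $T = C_1 n/\epsilon$ with $C_1$ sufficiently large, Markov applied to the meeting-time bound gives $\mathbb{P}(M^{X,Z}>T)\leq \epsilon/2$, and averaging the hitting-time estimate over $Y_0\sim\pi$ gives $\mathbb{P}(M^{X,Y}\leq T)\leq O(1/(n\epsilon))\leq \epsilon/2$ once $n\geq C_2/\epsilon^2$. This yields $\mathbb{P}(\Mgood\leq\Mbad)<\epsilon$ as required. The main obstacle is the rigorous justification of the meeting-time bound and of the exponential hitting-time estimate; both reduce to mixing-time/spectral analyses of the super-walk on $K_n$, but require some care with the second-moment and Aldous--Brown-type estimates on a non-transitive graph.
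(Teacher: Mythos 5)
Your high-level strategy---exhibit a graph on which two moving walks meet much faster than a single walk hits a $\pi$-typical target, then split on $\{M^{X,Z}\le T\}\cup\{M^{X,Y}\le T\}^c$---is the right one and is exactly what the paper does. But your graph does not have the required separation, and the gap is in the meeting-time estimate $\E{M^{X,Z}}=O(n)$. On your $G_n$ (a clique $K_n$ with $n^2$ pendant leaves per clique vertex), project each walk onto its current cluster $\{u_i\}\cup\{\text{leaves of }u_i\}$. A walk sits at the hub $u_i$ about half the time and, per jump from the hub, moves to another cluster with probability $\approx (n-1)/n^2$; so it changes cluster at rate $\Theta(1/n)$, landing in a uniformly random new cluster. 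Two independent such cluster-walks on $K_n$ first occupy the same cluster only after time $\Theta(n^2)$, and the walks cannot collide before their clusters coincide. The heuristic ``once mixed, collide at rate $\|\pi\|_2^2\sim 1/(4n)$'' fails here because collisions are strongly clustered in time: after a first meeting at $u_i$ the two walks remain in cluster $i$ for time $\Theta(n)$ and are simultaneously at $u_i$ a constant fraction of that time, so the expected diagonal occupation following a meeting is $\Theta(n)$, and the occupation-time identity gives $\pr{M^{X,Z}\le t}=\Theta\left(t\|\pi\|_2^2/n\right)=\Theta(t/n^2)$, i.e.\ $\E{M^{X,Z}}=\Theta(n^2)$. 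A careful version of the second-moment argument you invoke would expose exactly this.

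This is fatal because $\Exp{\pi}{\tau_{u_j}}=\Theta(n^2)$ for clique vertices $u_j$ as well (reaching cluster $j$ takes $\Theta(n)$ cluster-changes, each of duration $\Theta(n)$), and the clique vertices carry stationary mass $\approx 1/2$, since their total degree is $\approx n^3$ out of $\approx 2n^3$. So with probability about $1/2$ the target $Y_0$ is a clique vertex, in which case $M^{X,Y}$ and $M^{X,Z}$ are of the same order; no choice of $T$ then makes both error terms small, and your argument cannot get below a constant. The paper's construction repairs precisely these two defects: it hangs $n$ cliques of size $\sqrt{Cn}$ off a \emph{single} cut vertex $v$ of a large clique $K_{n+1}$, so that (i) the stationary mass of $K_{n+1}$ is $O(1/C)=O(\epsilon)$, forcing $Y_0$ into one of the $n$ small outer cliques (almost surely a different one from $X_0$'s), (ii) each visit of $X$ to $v$ gives a \emph{constant} probability that $X$ and $Z$ meet inside $K_{n+1}$ before $X$ returns to $v$, while (iii) each visit to $v$ gives only probability $O(1/n)$ of entering $Y_0$'s clique; comparing these after $r=\sqrt{n}$ visits to $v$ separates the two events. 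To salvage your approach you would need the decorations to carry almost all of $\pi$ while meetings occur in a small common core revisited on a much shorter time scale---which is essentially the paper's graph.
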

On the other hand, we believe that for certain values of $\lambda_X,\lambda_Y$ and $\lambda_Z$,  universal lower bounds are possible without transitivity. Here is a concrete conjecture, which relates to the setting of Theorem~\ref{thm:put}.

\begin{conjecture}\label{conj:1}If $\lambda_Y=\lambda_X=1$ and $\lambda_Z=0$, the inequality
$$\pr{\Mgood\leq \Mbad}\geq 1/3$$
holds for all finite irreducible and reversible chains $P$.\end{conjecture}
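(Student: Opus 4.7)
The plan is to introduce an auxiliary chain $\tilde Z$, an independent rate-$1$ chain with transition matrix $P$ and initial law $\pi$, and to leverage the full exchangeability present in the system $(X, Y, \tilde Z)$. Observe that $\tilde Z_0 \sim \pi$ is independent of $(X, Y)$, so setting $Z := \tilde Z_0$ recovers the setup of Conjecture~\ref{conj:1}. In the ``all-moving'' system the three rate-$1$ chains are i.i.d.\ from $\pi$, hence by exchangeability the three disjoint strict-first-meeting events
\[
\{M^{X,Y} < M^{X,\tilde Z}\wedge M^{Y,\tilde Z}\},\quad \{M^{X,\tilde Z} < M^{X,Y}\wedge M^{Y,\tilde Z}\},\quad \{M^{Y,\tilde Z} < M^{X,Y}\wedge M^{X,\tilde Z}\}
\]
each have probability $(1-p)/3$, where $p := \sum_x \pi(x)^3 = \pr{X_0 = Y_0 = \tilde Z_0}$ is the probability of a time-zero triple tie. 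In the frozen system the triple-tie event $\{X_0 = Y_0 = Z\}$ (of probability $p$) automatically yields $\Mgood = 0 = \Mbad$, so the conjecture reduces to the key comparison
\[
\pr{M^{X,Y} < T^X_{\tilde Z_0} \wedge T^Y_{\tilde Z_0}} \;\ge\; \pr{M^{X,Y} < M^{X,\tilde Z} \wedge M^{Y,\tilde Z}},
\]
where $T^W_w := \inf\{t\ge 0 : W_t = w\}$. Combined with the triple-tie term this yields $\pr{\Mgood \le \Mbad} \ge (1-p)/3 + p = 1/3 + 2p/3 \ge 1/3$.

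Heuristically the inequality expresses that a stationary target is easier for $X$ and $Y$ to avoid than a moving target which can ``chase'' them, consistent with the $\lambda_Z$-dependence of the bound in Theorem~\ref{thm:redmond}. I would attempt to prove it via a coupling using common $X, Y, \tilde Z$ trajectories. The naive pointwise inequality $T^X_{\tilde Z_0} \wedge T^Y_{\tilde Z_0} \ge M^{X,\tilde Z} \wedge M^{Y,\tilde Z}$ is false because $X$ can pass through $\tilde Z_0$ at a time when $\tilde Z$ has wandered away. A more refined route exploits reversibility: since $\tilde Z_s \sim \pi$ for every fixed $s$, the time-reversed process $(\tilde Z_{M^{X,Y}-u})_{0 \le u \le M^{X,Y}}$ has the same law as a forward rate-$1$ chain starting from $V := \tilde Z_{M^{X,Y}}$, and by the strong Markov property applied to the $(X,Y)$-stopping time $M^{X,Y}$, together with the independence of $\tilde Z$ from $(X,Y)$, we have $V \sim \pi$ independent of $(X, Y)$. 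Thus $V$ can be identified in distribution with the frozen $Z$, and the moving $\tilde Z$ trajectory may be reinterpreted as a reversed path originating from the frozen target.

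The main obstacle is making this reversibility-based path swap rigorous while simultaneously controlling both the $X$- and $Y$-comparisons. The inequality is genuinely global (an average over $\pi^{\otimes 3}$) and cannot be proved triple-by-triple: small examples (e.g.\ simple random walk on the $4$-cycle, with $(x,y,z) = (1,3,0)$) show that $\pr{M^{X,Y} < T^X_z \wedge T^Y_z \mid X_0 = x,\, Y_0 = y}$ can be strictly less than $1/3$ for particular distinct $(x,y,z)$. An alternative route avoiding the coupling construction is to establish that $\pr{\Mgood \le \Mbad}$ is a non-increasing function of $\lambda_Z \in [0, 1]$ with $\lambda_X = \lambda_Y = 1$ fixed, via a Poissonian thinning coupling comparing rates $\lambda_Z$ and $\lambda_Z + \delta$; since $\lambda_Z = 1$ yields $\ge 1/3$ by the same exchangeability argument, such monotonicity would give the conjecture at $\lambda_Z = 0$. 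Both approaches use only reversibility of $P$, matching the absence of a transitivity hypothesis in the conjecture.
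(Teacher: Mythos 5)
First, note that this statement is presented in the paper as an open conjecture: the authors explicitly say that even a uniform positive lower bound $\pr{\Mgood\le\Mbad}\ge c>0$ for general irreducible reversible $P$ would be a new result, and Section 7 only shows the constant $1/3$ cannot be improved. So there is no proof in the paper to compare against, and your proposal must stand on its own. It does not: it is a reduction, not a proof. The exchangeability step is sound --- with three independent stationary rate-$1$ copies the three strict-first-meeting events are disjoint, exchangeable, and (since a tie at a positive time would force a triple coincidence, which would require two of the chains to have already met) their union covers everything except the time-zero triple tie, so each has probability $(1-p)/3$. But the entire burden then falls on the comparison
\[
\pr{M^{X,Y} < T^X_{\tilde Z_0}\wedge T^Y_{\tilde Z_0}} \;\ge\; \pr{M^{X,Y} < M^{X,\tilde Z}\wedge M^{Y,\tilde Z}},
\]
which you do not prove and which is essentially as hard as the conjecture itself (it implies it, and carries the same ``hitting a frozen point is at least as slow as meeting a moving walker'' content). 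The paper is explicit that precisely this kind of comparison is the crux: the known bounds of Oliveira and Sousi--Winkler go in one direction only and only up to constants, and Theorem~\ref{thm:counter} is built on a graph where meeting a \emph{moving} walker is dramatically faster than hitting a \emph{frozen} one, so the heuristic ``a stationary target is easier to avoid'' is exactly the delicate point, not something one can invoke.

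Neither of your two proposed routes closes this gap. The time-reversal observation only says that $(\tilde Z_{M^{X,Y}-u})_{u}$ is again a stationary chain started from a $\pi$-distributed point independent of $(X,Y)$; the reversed trajectory still moves, so nothing identifies the event $\{M^{X,Y}<M^{X,\tilde Z}\wedge M^{Y,\tilde Z}\}$ with an event about hitting the frozen point $\tilde Z_{M^{X,Y}}$, and you give no coupling that does. The monotonicity-in-$\lambda_Z$ route is likewise only asserted: no thinning coupling is exhibited, and monotonicity of $\pr{\Mgood\le\Mbad}$ in $\lambda_Z$ is itself an unproven (and plausibly false without transitivity) claim --- adding jumps to $Z$ changes both $M^{X,Z}$ and $M^{Y,Z}$ in ways that a naive pointwise coupling cannot order, for the same reason you correctly note that $T^X_{\tilde Z_0}\wedge T^Y_{\tilde Z_0}\ge M^{X,\tilde Z}\wedge M^{Y,\tilde Z}$ fails pointwise. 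Your own observation that the inequality fails triple-by-triple (e.g.\ on the $4$-cycle) is a useful sanity check, but it underscores that the required argument must be a global averaging argument over $\pi^{\otimes 3}$, and no such argument is supplied. As it stands the proposal reformulates the conjecture; it does not prove it.
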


Alexander Holroyd (personal communication) pointed out an example 
showing that for any $\delta>0$ there exist transitive chains for which $\pr{\Mgood \leq \Mbad}\leq 1/3+\delta$. We describe this example in Section~\ref{sec:sharpness}. This means that, if true, Conjecture \ref{conj:1} is best possible even for transitive chains. However, we note that any uniform lower bound 
$$\pr{\Mgood\leq \Mbad} \geq c>0$$
for all $P$, and for $\lambda_X, \lambda_Y$ and $\lambda_Z$ as in Conjecture~\ref{conj:1}, would be a new result.

\begin{remark}\rm{
Without reversibility, the conjecture fails badly. Consider a clockwise continuous time random walk on a cycle of length $2n$. More precisely, with $P= (p_{ij})_{1\leq i,j\leq n}$ we have $p_{ij}=1$ if $j=(i+1)\bmod n$ and $p_{ij}=0$ otherwise. The distance between independent random walkers behaves as continuous time simple symmetric random walk reflected at $0$ and $n$. 
So started from stationarity, it typically takes such walkers time of order $n^2$ to meet. On the other hand, the hitting time of any point is at most of order $n$.
}
\end{remark}

Before we continue, we say a few words about the main proof ideas. The unifying theme of the proofs of Theorems~\ref{thm:put} and~\ref{thm:redmond} is the relationship between meeting times and hitting times of single vertices when $P$ is transitive. Aldous and Fill \cite[Chapter~14/Proposition~5 and Chapter~3]{AldFill} have related the expected values of these random variables via martingales, and we use similar ideas to prove Theorem~\ref{thm:put}. 

For the proof of Theorem~\ref{thm:redmond}, we need a stronger result establishing {\em identities in distribution} of meeting and hitting times, which (somewhat surprisingly) seems to be new: see \lemref{identity} below. The proof of Theorem~\ref{thm:redmond} requires several other tools, including small time estimates for hitting times (\lemref{smalltimetransitive}) and an occupation time formula for product chains (\lemref{stop}).

The proof of Theorem~\ref{thm:counter} builds a graph with two parts: the ``Up"~part concentrates the bulk of the stationary measure, but the ``Down"~ part is where meetings tend to happen, and they happen fast. As a result, only a negligible fraction of the ``Up"~part is explored before $X$ and $Z$ meet, and the upshot is that $M^{X,Y}>M^{X,Z}$ with high probability.

\section{The 1/4 lower bound}\label{sec:perla}

In this section we prove Theorem~\ref{thm:put}. The argument is fairly short, and much simpler than the one for the more general Theorem~\ref{thm:redmond}. Before presenting the proof, we recall  some standard facts about hitting times which are also used later on. 

The hitting time of a state $z\in\Omega$ by $X$ is the first time $t$ at which $X_t=z$, i.e.
\begin{equation}\label{eq:defhit}\tau^X_z:=\inf\{t\geq 0\,:\,X_t=z\}.
\end{equation}
We define $\tau^Y_z$ similarly and we also let
\begin{equation}\label{eq:defthit}t^{*}_{\rm hit}:= \max_{x\in\Omega}\,\estart{\tau^X_x}{\pi} \quad \text{and} \quad t_{\rm hit}:= \max_{(x,z)\in\Omega}\,\estart{\tau^X_z}{x}.
\end{equation}

Whenever there is no confusion, i.e.\ if there is a single chain in question, we will drop the dependence on $X$ or $Y$ from the notation of the hitting times.

\begin{lemma}\label{lem:knownfacts}
For any reversible chain with two or more states we have 
\[
0<t_{\rm hit}\leq 2t^{*}_{\rm hit}.
\]
Moreover, if $X$ is a reversible and transitive chain, then for all $x,z\in \Omega$ and all $t\geq 0$ we have 
\[
\prstart{\tau_z\leq t}{x} = \prstart{\tau_x\leq t}{z}.
\]
\end{lemma}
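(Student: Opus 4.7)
My plan divides into two essentially independent parts.

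For the bound $0<t_{\rm hit}\le 2 t^*_{\rm hit}$, the positivity $t_{\rm hit}>0$ is immediate from $|\Omega|\ge 2$ and irreducibility: for any distinct $x,z$, $\tau_z$ from $x$ is bounded below by the first jump time of $X$, whose mean is positive. For the upper bound I would start from the strong-Markov triangle inequality
\[
\estart{\tau_y}{x} \;\le\; \estart{\tau_z}{x} + \estart{\tau_y}{z},
\]
valid for all states $x,y,z$ because one way from $x$ to $y$ is via $z$. I would then average against $\pi$ in the variable $z$: the left-hand side is constant in $z$, the second right-hand sum becomes $\estart{\tau_y}{\pi}\le t^*_{\rm hit}$, and the first sum $\sum_z\pi(z)\estart{\tau_z}{x}$ is the classical \emph{random target time} from $x$, which by the random target lemma for irreducible chains is independent of the starting state $x$. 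Evaluating at $x=\pi$ bounds it by $\sum_z\pi(z)\estart{\tau_z}{\pi}\le t^*_{\rm hit}$. Combining the two bounds and taking the maximum over $x,y$ gives $t_{\rm hit}\le 2 t^*_{\rm hit}$.

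For the identity in distribution I would pass to Laplace transforms, which I expect to be the cleanest way to combine reversibility with transitivity. For each $\lambda>0$, define the Green's function
\[
G_\lambda(u,v) \;:=\; \int_0^\infty e^{-\lambda t}\,P^t(u,v)\,dt.
\]
Applying the strong Markov property at $\tau_z$ to the chain started from $x$ yields the standard identity
\[
\estart{e^{-\lambda \tau_z}}{x} \;=\; \frac{G_\lambda(x,z)}{G_\lambda(z,z)},
\]
and the analogous identity with $x$ and $z$ swapped. Reversibility of $P^t$ for each $t$, integrated against $e^{-\lambda t}$, gives $\pi(x)G_\lambda(x,z)=\pi(z)G_\lambda(z,x)$; transitivity forces $\pi$ to be uniform, so this reduces to $G_\lambda(x,z)=G_\lambda(z,x)$. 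Transitivity also supplies an automorphism $\phi$ with $\phi(x)=z$; applying $\phi$ inside the integrand defining $G_\lambda$ immediately gives $G_\lambda(x,x)=G_\lambda(z,z)$. These three equalities combine into
\[
\estart{e^{-\lambda \tau_z}}{x} \;=\; \frac{G_\lambda(x,z)}{G_\lambda(z,z)} \;=\; \frac{G_\lambda(z,x)}{G_\lambda(x,x)} \;=\; \estart{e^{-\lambda \tau_x}}{z}
\]
for every $\lambda>0$, and injectivity of the Laplace transform on nonnegative random variables yields equality of distributions, hence of the CDFs as claimed.

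The only real subtlety I anticipate is in the second part: the most naive attempt, a pathwise reversal, does not work, since reversing a path from $x$ to $z$ that avoids $z$ in its interior gives a path from $z$ to $x$ whose interior avoids $z$ (not $x$), and hence does not directly correspond to a realization of $\tau_x$. Passing to the Green's function sidesteps this issue because $G_\lambda$ aggregates over \emph{all} paths, and the ratio form of the Laplace transform of $\tau_z$ absorbs the ``self-loop'' factors $G_\lambda(z,z)$ and $G_\lambda(x,x)$, which are precisely what transitivity equates.
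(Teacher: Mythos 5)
Your proposal is correct. Note that the paper does not prove this lemma at all: it simply cites \cite[Lemma~10.2]{LevPerWil} for the first assertion and \cite[Lemma~1, Chapter~7]{AldFill} for the second, so any comparison is really with those references. Your first part is essentially the Levin--Peres--Wilmer argument (triangle inequality for hitting times plus the random target lemma); as your own proof shows, reversibility is not actually needed there, only irreducibility. Your second part is a clean, self-contained route to the Aldous--Fill symmetry lemma: the identity $\estart{e^{-\lambda \tau_z}}{x}=G_\lambda(x,z)/G_\lambda(z,z)$ combined with $G_\lambda(x,z)=G_\lambda(z,x)$ (reversibility plus uniformity of $\pi$, the latter following from transitivity and uniqueness of the stationary distribution) and $G_\lambda(x,x)=G_\lambda(z,z)$ (an automorphism carrying $x$ to $z$ preserves $P^t$ on the diagonal) gives equality of Laplace transforms, hence of distributions. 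This is in the same spirit as the Aldous--Fill proof, which works with the first-passage decomposition $P^t(x,z)=\int_0^t \prstart{\tau_z\in ds}{x}\,P^{t-s}(z,z)$ directly; passing to the transform as you do makes the ``division by the return-time factor'' step cleanest, and your closing remark correctly identifies why a naive pathwise reversal fails. The only points worth making explicit in a written version are that $\tau_z<\infty$ almost surely (so Laplace-transform uniqueness applies) and that an automorphism of $P$ is also an automorphism of every $P^t$; both are immediate.
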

\begin{proof}[\bf Proof]

For a proof of the first assertion in discrete time we refer the reader to~\cite[Lemma~10.2]{LevPerWil}. A proof of the symmetry property specific to transitive chains can be found in~\cite[Lemma~1, Chapter~7]{AldFill}.
\end{proof}

\begin{lemma}[Aldous]\label{lem:AF}
Let $P$ be an irreducible and reversible transition matrix.
Suppose that~$X$ and~$Y$ are independent continuous time Markov chains that jump at rate~$1$ according to the transition matrix $P$. For all $x,z\in \Omega$ define $f(x,z):=\estart{\tau^X_z}{x}$. 
Then $f(X_t,z)+t$, $f(Y_t,z)+t$ and $f(X_t,Y_t)+2t$ are martingales up to time $$S:=\Mgood\wedge \Mbad = M^{X,Y}\wedge \tau^X_z\wedge \tau^Y_z,$$ for any initial states $(x,y)\in\Omega^2$ and any $z\in\Omega$. \end{lemma}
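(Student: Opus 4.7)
The plan is to apply Dynkin's formula to each of the three processes, using that $f(\cdot,z)$ solves the Poisson equation $Lf(\cdot,z)=-1$ on $\Omega \setminus \{z\}$, where $Lg(x) = \sum_y P(x,y)(g(y)-g(x))$ is the generator of the rate-$1$ chain with matrix $P$. Recall that for any bounded $g$, the process $g(X_t) - g(X_0) - \int_0^t L g(X_s)\,\mathrm{d}s$ is a martingale with respect to $\mathcal{F}_t = \sigma(X_s,Y_s : s \le t)$, since enlarging the filtration by the independent chain $Y$ preserves the martingale property.

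For the two single-chain martingales, the classical recursion $f(x,z) = 1 + \sum_y P(x,y)f(y,z)$ for $x \neq z$ is precisely $Lf(\cdot,z)(x) = -1$ off $z$. Since $S \le \tau^X_z \wedge \tau^Y_z$, both chains stay away from $z$ on $[0,S)$, so $Lf(X_s,z)=Lf(Y_s,z)=-1$ there, and Dynkin's formula immediately gives that $f(X_{t \wedge S},z) + (t \wedge S)$ and $f(Y_{t \wedge S},z) + (t \wedge S)$ are martingales.

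For the joint martingale, view $(X,Y)$ as a continuous-time Markov chain on $\Omega^2$ with generator $\widetilde L g(x,y) = L_1 g(x,y) + L_2 g(x,y)$, where $L_i$ applies $L$ in the $i$-th coordinate. Taking $g(x,y) := f(x,y)$, the previous step gives $L_1 f(x,y)=-1$ for $x \neq y$, so it remains to establish $L_2 f(x,y) = -1$ for $x \neq y$. This is the place where I would invoke \lemref{knownfacts}: from the identity $\mathbb{P}_x(\tau_y \le t) = \mathbb{P}_y(\tau_x \le t)$ (which, in the intended setting, uses that $P$ is transitive in addition to reversible) one integrates in $t$ to obtain $f(x,y) = f(y,x)$, whence $L_2 f(x,y) = L_2 f(y,x)$, and this equals $-1$ for $y \neq x$ by the Poisson identity applied with target $x$. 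Combining these, $\widetilde L f(x,y) = -2$ off the diagonal, and since $S \le M^{X,Y}$ keeps $(X_s, Y_s)$ off the diagonal on $[0,S)$, Dynkin's formula yields that $f(X_{t \wedge S}, Y_{t \wedge S}) + 2(t \wedge S)$ is a martingale.

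The main obstacle is precisely the identity $L_2 f(x,y) = -1$: without the symmetry $f(x,y) = f(y,x)$ this fails (a direct check on a $3$-vertex path shows $L_2 f$ need not equal $-1$), and the factor $2$ in the martingale would not appear. All other steps are routine applications of the Poisson equation for hitting times together with Dynkin's formula for the rate-$1$ and rate-$2$ chains on $\Omega$ and $\Omega^2$ respectively.
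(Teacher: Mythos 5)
Your argument is correct, and it is essentially the standard generator/Dynkin proof; the paper itself gives no proof of \lemref{AF}, deferring entirely to Aldous--Fill, so there is nothing internal to compare against. The two single-chain martingales follow exactly as you say from the Poisson equation $Lf(\cdot,z)=-1$ off $z$ together with the observation that adjoining the independent chain to the filtration is harmless. More importantly, you have put your finger on a real imprecision in the statement: for the third process one needs $L_2f(x,y)=-1$ off the diagonal, which (by uniqueness of the solution to the Poisson equation vanishing at $x$) is equivalent to the symmetry $\mathbb{E}_x[\tau_y]=\mathbb{E}_y[\tau_x]$, and this fails for general reversible chains --- your three-vertex path is a genuine counterexample ($\mathbb{E}_1[\tau_2]=1$ while $\mathbb{E}_2[\tau_1]=3$, giving $L_2f(1,2)=+1$ and hence $\widetilde{L}f(1,2)=0\neq -2$, so $f(X_t,Y_t)+2t$ is not a martingale from $(1,2)$). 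The lemma as written assumes only irreducibility and reversibility, so its third conclusion is strictly speaking false under the stated hypotheses; it becomes true once one adds transitivity (or just the symmetry from \lemref{knownfacts}), which is exactly the setting in which the paper applies it in the proof of \thmref{put}. So your proof is the right one, with the caveat --- which you correctly flag rather than paper over --- that the hypothesis of transitivity must be imported into the lemma for the joint martingale.
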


For a proof of the lemma above we refer the reader to~\cite[Chapter~14/Proposition~5 and Chapter~3]{AldFill}.

\begin{proof}[\bf Proof of Theorem~\ref{thm:put}] 

Since $X$ and $ Y$ are two independent copies of the same chain, we have $\estart{\tau^X_b}{a}=\estart{\tau^Y_b}{a}$ for all $a,b$.
By Lemma~\ref{lem:AF} we now get that $(G_t)_{t\geq 0}$ is a martingale up to time~$S$, where
\[
G_t := \estart{\tau^X_z}{X_t} + \estart{\tau^X_z}{Y_t} - \estart{\tau^X_{Y_t}}{X_t}\,\,\,(t\geq 0).
\]
This martingale is bounded (because the state space is finite). The fact that the chain is finite and irreducible implies $S<\infty$ almost surely for all initial states. We deduce from optional stopping that 
$$\E{G_0} = \E{G_S}.$$
The left hand side above is given by the quantity $t^{*}_{\rm hit}$ defined in \eqref{eq:defthit}. This is because  
\begin{equation}\label{eq:transitivitycomesinhere}\E{G_0}=\estart{\tau^X_z}{\pi} + \estart{\tau^X_z}{\pi} - \sum_{y\in\Omega}\pi(y)\estart{\tau^X_{y}}{\pi} = \Exp{\pi}{\tau^X_z},\end{equation}
where the second equality follows from the fact that for a transitive chain, $\estart{\tau_y^X}{\pi}$ is independent of $y$. Using this a second time yields
\begin{equation}\label{eq:equalitythitstar}t^{*}_{\rm hit} = \E{G_S}.\end{equation}

On the other hand, at time $S$ we have two alternatives.
\begin{itemize}
\item If $\tau^X_z\wedge \tau^Y_z\leq \Mgood$, either $X_S=z$, and then $G_S = \Exp{Y_S}{\tau^X_z} - \Exp{z}{\tau^X_{Y_s}}$, or $Y_S=z$, in which case
$G_S = \Exp{X_S}{\tau^X_{z}}-\Exp{X_S}{\tau^X_{z}}$. In both cases $G_S=0$: this is obviously true in the second case, and follows from \lemref{knownfacts} in the first case.
\item On the other hand, if $\Mgood<\tau^X_z\wedge \tau^Y_z$, then $G_S = 2\Exp{X_S}{\tau^X_z}\leq 2t_{\rm hit}$. 
\end{itemize}
We deduce that $$G_S\leq 2t_{\rm hit}\,\1(\Mgood<\tau^X_z\wedge \tau^Y_z).$$ Plugging this into \eqref{eq:equalitythitstar} gives
$$t^{*}_{\rm hit}\leq 2\,t_{\rm hit}\,\pr{\Mgood<\Mbad}.$$
Using that $t_{\rm hit}\leq 2t^{*}_{\rm hit}$ from Lemma~\ref{lem:knownfacts} finishes the proof.
\end{proof}

\section{Towards the general lower bound}\label{sec:towards}

In this section we collect the tools that we will use in the proof of~\thmref{redmond}. We first argue that the obvious ``fix"~to the proof of~\thmref{put} does not work in all cases. Indeed, a straightforward extension of Lemma~\ref{lem:AF} establishes that
$$G_t:=f(X_t,Z_t) + f(Y_t,Z_t) - \frac{1+\lambda_Y+2\lambda_Z}{1+\lambda_Y}\,f(X_t,Y_t)$$
is a martingale up to time $S$. One can see that in this case
$$\E{G_0} = \left(1 - \frac{2\lambda_Z}{1+\lambda_Y}\right)\,t^{*}_{\rm hit},$$
which easily yields
\[
\pr{\Mgood<\Mbad}\geq \frac{1}{4}\cdot \left(1-\frac{2\lambda_Z}{1+\lambda_Y} \right).
\]
In particular, we obtain the same bound as in Theorem~\ref{thm:put} provided that $\lambda_Z=0$ and $\lambda_X\neq 0$. However, this bound becomes useless when $\lambda_Z>(1+\lambda_Y)/2$. Other linear combinations of $f(X_t,Z_t)$, $f(Y_t,Z_t)$ and $f(X_t,Y_t)$ also fail to achieve our goal when $\lambda_Z$ is large. So, in general a different strategy is needed.

\subsection{Hitting times for states and trajectories}\label{sec:firsthit}

In this section we collect some results on hitting times for a single Markov chain. 

Recall the quantity $t^{*}_{\rm hit}$ defined in \eqnref{defthit}. The next lemma shows that, up to a constant factor, $t^{*}_{\rm hit}$ also bounds expected hitting times of moving targets, from arbitrary initial states. This lemma is essentially due to Oliveira~\cite[Lemma~1.1]{Oliveira}, but in this particular form it appeared in~\cite{SousiWinkler}. 
\begin{lemma}\label{lem:meetingtime}
Let $X$ be a reversible Markov chain taking values in $\Omega$ and ${\bf h}=(h_t)_{t\geq 0}$ a deterministic, c\`adl\`ag, $\Omega$-valued trajectory. If 
$$\tau_{\bf h}:=\inf\{t\geq 0\,:\,X_t = h_t\},$$ then for any $x\in\Omega$, 
$$\Exp{x}{\tau_{\bf h}}\leq 11\,t^{*}_{\rm hit}.$$\end{lemma}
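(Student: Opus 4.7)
The main strategy is to reduce the expectation bound to a uniform tail estimate: find an absolute constant $T_0$ such that
\[
\prstart{\tau_{\bf h} > T_0 \cdot t^{*}_{\rm hit}}{x} \leq \frac{1}{2}
\]
for every starting point $x$ and every c\`adl\`ag trajectory $\mathbf h$. Once this holds, applying the Markov property at time $T_0\cdot t^{*}_{\rm hit}$ to the chain and to the shifted trajectory $(h_{T_0 t^{*}_{\rm hit}+s})_{s\geq 0}$, which is again c\`adl\`ag, gives inductively $\prstart{\tau_{\bf h} > k T_0 t^{*}_{\rm hit}}{x} \leq 2^{-k}$. Integrating this geometric tail then yields $\estart{\tau_{\bf h}}{x} \leq 2 T_0 \cdot t^{*}_{\rm hit}$, and the constant $11$ in the statement corresponds to $T_0 = 11/2$ after careful bookkeeping.

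For the tail estimate I would follow an averaging argument in the spirit of Oliveira~\cite{Oliveira}. Introduce an independent stationary copy $\widetilde X$ of the chain, started from $\pi$; its path $(\widetilde X_t)_{t\geq 0}$ is a random c\`adl\`ag trajectory. By reversibility and the fact that $\estart{\tau_z}{\pi} \leq t^{*}_{\rm hit}$ for every state $z$, one can bound the expected meeting time of $X$ (started from $x$) with $\widetilde X$ by a constant multiple of $t^{*}_{\rm hit}$: this rests on a first- and second-moment estimate on the occupation time $\int_0^T \Ind{X_t = \widetilde X_t}\,dt$, where the first moment is $\int_0^T \prstart{X_t = \widetilde X_t}{x}\, dt$ and the second moment can be controlled via Markov/reversibility together with the fixed-state hitting-time bounds. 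If the desired tail bound failed for some deterministic trajectory $\mathbf h$, averaging that failure against the law of $\widetilde X$ would contradict this meeting-time bound, combined with $t_{\rm hit}\leq 2 t^{*}_{\rm hit}$ from \lemref{knownfacts}.

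The main obstacle is the time-varying nature of the target: standard bounds control hitting times of single states, and transferring them to a moving target forces one to exploit reversibility through the averaging/exchangeability argument above, or alternatively a direct second-moment estimate on the occupation measure of the graph $\{(t, h_t) : t \geq 0\}$. Pinning down the explicit constant $11$ then amounts to optimizing $T_0$ at each step of the reduction, in particular in the Paley--Zygmund-style step that produces the tail bound from the moment estimates.
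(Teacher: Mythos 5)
There is a genuine gap in the step that is supposed to produce the uniform tail bound. Your reduction scheme (a uniform bound $\prstart{\tau_{\bf h}>T_0\,t^{*}_{\rm hit}}{x}\leq 1/2$ over all $x$ and all c\`adl\`ag $\mathbf h$, iterated via the Markov property applied to the shifted trajectory, then integrated) is sound. But the argument you offer for the tail bound itself does not work: a bound on the expected meeting time of $X$ with an \emph{independent stationary copy} $\widetilde X$ is an average over the law of $\widetilde X$'s trajectory, and it cannot be contradicted by the failure of the bound for one particular deterministic trajectory $\mathbf h$ --- any fixed trajectory is atypical (indeed typically has probability zero) under the law of $\widetilde X$, so ``averaging the failure against the law of $\widetilde X$'' detects nothing. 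Passing from an average-case bound over stationary trajectories to a worst-case bound over adversarial deterministic trajectories is precisely the hard content of the Sousi--Winkler ``moving target'' result, and your sketch does not supply it. Your parenthetical alternative (a direct first- and second-moment estimate on $\int_0^T \1(X_t=h_t)\,dt$ for the \emph{deterministic} trajectory, with reversibility used to control the correlations $\prstart{X_u=h_u\mid X_t=h_t}{}$) is the right direction, but as stated it is only a pointer, and it is exactly where the work lies; in particular the constant $11$ is never actually derived.

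For comparison, the paper does not reprove this result at all: it quotes from the literature that $\estart{\tau_{\bf h}}{x}\leq c\,t_{\rm hit}$ for reversible chains with $c\leq 4+5/4$, and then applies $t_{\rm hit}\leq 2\,t^{*}_{\rm hit}$ from Lemma~\ref{lem:knownfacts} to get $2c\leq 11$. If you want a self-contained proof you must carry out the occupation-time second-moment argument for an arbitrary deterministic target (or reproduce the cat-and-mouse argument of Sousi--Winkler); as written, your proposal replaces the essential step with an invalid averaging deduction.
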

\begin{proof}[\bf Proof]
In~\cite{SousiWinkler} using~\cite[Lemma~1.1]{Oliveira} it is
proved that 
$$\Exp{x}{\tau_{\bf h}}\leq c\,t_{\rm hit}.$$
for a universal constant $c>0$, where $t_{\rm hit}$ is as in \eqnref{defthit}. Inspection of the proof \cite{SousiWinkler} shows that $c\leq 4 + 5/4$, therefore $2c\leq 11$. Lemma~\ref{lem:knownfacts} finishes the proof.\end{proof}

For a reversible transition matrix $P$ we let  $\lambda_*=\max_{i\geq 2}|\lambda_i|$, where  $1=\lambda_1\geq \lambda_2\geq \lambda_3,\ldots$ are its eigenvalues in decreasing order. We define the relaxation time via $\trel=(1-\lambda_*)^{-1}$.

The next lemma bounds the probability that $\tau_z$ is small for reversible chains. Clearly, if no assumption is made on $z$ and on the starting state $x$, such an estimate cannot be very good (think of two adjacent points on a path). The next lemma shows that if we choose the ``right"~starting state, and only consider the majority of possible target states, we can show that $\tau_z$ dominates an exponential random variable with mean $t_{\rm rel}$. This will be used later to upper bound the probability that $\tau_z/t_{\rm hit}$ is very small.

\begin{lemma}\label{lem:smalltime1} 

Let $X$ be a reversible chain. There exist $x\in \Omega$ and a subset $A\subset \Omega$ with stationary measure $\pi(A)\geq 1/2$ such that, if $\tau_A:=\min_{z\in A}\tau_z$, then for any $t>0$,
$$\Prp{x}{\tau_A>t}\geq e^{-\frac{t}{t_{\rm rel}}}.$$\end{lemma}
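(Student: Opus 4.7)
The plan is a short spectral/martingale argument based on a non-constant eigenfunction of $P$. Since $P$ is reversible, take $\phi$ to be a real right-eigenfunction associated with an eigenvalue $\lambda$ with $|\lambda|=\lambda_*$ (generically $\lambda=\lambda_2$); orthogonality to the constants in $L^2(\pi)$ gives $\sum_z\pi(z)\phi(z)=0$, so $\phi$ takes both positive and negative values.

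First I would define the set $A$ and the starting state $x$. Replacing $\phi$ by $-\phi$ if necessary, I can arrange that $\pi(\{z:\phi(z)\leq 0\})\geq 1/2$; define $A:=\{z:\phi(z)\leq 0\}$ and pick $x\in\Omega$ with $\phi(x)=\max_z\phi(z)=:M>0$. Next I would invoke the standard fact that under $\Prp{x}{\cdot}$ the process $N_t:=e^{(1-\lambda_*)t}\phi(X_t)$ is a bounded martingale (its drift under the generator $P-I$ vanishes because $P\phi=\lambda_*\phi$). Optional stopping at the bounded stopping time $\tau_A\wedge t$ gives
\begin{equation*}
M=\phi(x)=\Exp{x}{e^{(1-\lambda_*)(\tau_A\wedge t)}\phi(X_{\tau_A\wedge t})}.
\end{equation*}
I would then split this expectation according to $\{\tau_A\leq t\}$ and $\{\tau_A>t\}$. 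On the first event $X_{\tau_A}\in A$, so $\phi(X_{\tau_A})\leq 0$ and the contribution is non-positive; on the second event $\phi(X_t)\leq M$, so the contribution is at most $M e^{(1-\lambda_*)t}\Prp{x}{\tau_A>t}$. Combining and dividing by $M$ gives exactly $\Prp{x}{\tau_A>t}\geq e^{-(1-\lambda_*)t}=e^{-t/\trel}$.

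The main subtlety I expect to confront is the sign of the eigenvalue: the clean step above works without fuss when $\lambda=\lambda_*>0$ (the typical case $\lambda_*=\lambda_2$), because then the coefficient $(1-\lambda_*)$ is both the martingale exponent and the desired rate. If instead $\lambda_*$ is attained by a negative eigenvalue $\lambda_n$ with $|\lambda_n|>\lambda_2$, the same computation produces the exponent $(1-\lambda_n)=(1+\lambda_*)$ in the martingale and only yields $\Prp{x}{\tau_A>t}\geq e^{-(1+\lambda_*)t}$, which is weaker than claimed. To close this gap I would instead apply the identical argument to the lazy transition matrix $(I+P)/2$, whose eigenvalues are $(1+\lambda_i)/2\in[0,1]$ with second-largest $(1+\lambda_2)/2$, and check that the continuous-time rate-$1$ chain driven by $P$ is a time-change of the one driven by $(I+P)/2$, so that hitting-time tails transfer with the same exponent $1-\lambda_*$ (equivalently, the relevant quantity in continuous time is always the positive spectral gap of $P-I$). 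The rest of the argument — choice of $A$, choice of $x$, and the optional-stopping split — is unchanged.
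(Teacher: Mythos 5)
Your argument is correct and is essentially the paper's proof: the authors take the same eigenfunction $\phi$ (with eigenvalue $1-1/\trel$), the same set $A=\{\phi\le 0\}$, the same starting point $x$ maximizing $\phi$, and use the semigroup identity $\Exp{x}{\phi(X_t)}=e^{-t/\trel}\,\phi(x)$ together with the strong Markov property at $\tau_A$ to kill the contribution of $\{\tau_A\le t\}$ — which is exactly your optional-stopping computation in different clothing. The sign subtlety you raise is real but is equally glossed over in the paper (its proof implicitly assumes $\lambda_*=\lambda_2$), and your lazy-chain reduction in fact produces the exponent $1-\lambda_2$ (the spectral gap of the continuous-time generator $P-I$) rather than literally $1-\lambda_*$; this is what both arguments actually establish, and it is the version consistent with the Aldous--Brown bound used downstream.
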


\begin{proof}[\bf Proof]
We use the spectral theory of reversible chains \cite[Section 3.4]{AldFill}. The first step is to note that $P$ has a non-zero eigenfunction~$\phi:\Omega\to\R$ such that $$P\phi = \left(1 - \frac{1}{t_{\rm rel}}\right)\,\phi.$$ This eigenfunction is orthogonal to the constant eigenfunction in the inner product induced by $\pi$, so it must take both positive and negative values. We may assume without loss of generality that the set 
$$A:=\{z\in\Omega\,:\,\phi(z)\leq 0\}$$
has measure $\pi(A)\geq 1/2$ (if that is not the case, replace $\phi$ with $-\phi$).
Choose $x\in\Omega$ with $\phi(x)>0$ as large as possible. On one hand
\begin{equation}\label{eq:eigen}\forall t>0,\, u\in\Omega\,:\,\Exp{u}{\phi(X_t)} = [e^{t(P-I)}\,\phi](u) = e^{-\frac{t}{t_{\rm rel}}}\,\phi(u).
\end{equation}
In particular, for all $u\in A$ we have $\estart{\phi(X_t)}{u}\leq 0$. Since $X_{\tau_A}\in A$, the strong Markov property then gives 
\begin{eqnarray*}
\Exp{x}{\phi(X_t)\,\1(\tau_A\leq t)} &=& \Exp{x}{\Exp{X_{\tau_A}}{\phi(X_{t-\tau_A})}\1(\tau_A\leq t)}\\  &\leq &\Exp{x}{0\cdot \1(\tau_A\leq t)}=0.
\end{eqnarray*}
Plugging this into \eqnref{eigen} with the choice $u=x$, and recalling $\phi(X_t)\leq \phi(x)$ always, we obtain
$$e^{-\frac{t}{t_{\rm rel}}}\phi(x) = \Exp{x}{\phi(X_t)}\leq \Exp{x}{\phi(X_t)\,\1(\tau_A>t)}\leq \phi(x)\,\Prp{x}{\tau_A>t}.$$
Dividing both sides by $\phi(x)$ (which is $>0$) finishes the proof.\end{proof}

\subsection{Results for transitive chains}

In this section we prove results on hitting times under the assumption that $P$ is transitive and reversible.

\subsubsection{A small-time estimate for hitting times}

We start by recalling a result which follows from the complete positivity of the law of $\tau_z$ when starting from $\pi$ together with bounds from Aldous and Brown~\cite{AldBrown}.

\begin{lemma}[Aldous-Brown]\label{lem:AldBrown}
Let $P$ be an irreducible and reversible chain. Define $f(s) = \escond{\tau_z-s}{\tau_z>s}{\pi}$ for all $s>0$. Then $f$ is an increasing function and 
\[
\sup_s f(s) \leq \estart{\tau_z}{\pi} + \trel.
\]
\end{lemma}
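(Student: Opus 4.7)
The plan is to combine two classical ingredients: (i) the tail of $\tau_z$ under $\prstart{\cdot}{\pi}$ is a non-negative mixture of decaying exponentials (the ``complete positivity'' alluded to before the statement), and (ii) the Aldous--Brown spectral bound on the slowest decay rate in that mixture.

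First, I would invoke the standard spectral representation of hitting times for irreducible reversible chains: there exist $c_k \ge 0$ and $0 < \mu_1 \le \mu_2 \le \cdots$ such that
\[
\prstart{\tau_z > t}{\pi} \;=\; \sum_{k} c_k\, e^{-\mu_k t} \qquad (t \ge 0).
\]
This is the complete monotonicity of the law of $\tau_z$ from stationarity; it is obtained by symmetrizing $P$ using $\pi$, restricting to $\Omega \setminus \{z\}$ (the generator of the chain killed upon hitting $z$), and diagonalizing, and is stated in Chapter~3 of \cite{AldFill}.

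Second, I would prove that $f$ is non-decreasing by direct calculation. Writing
\[
f(s) \;=\; \frac{\int_s^\infty \prstart{\tau_z > t}{\pi}\,\ud t}{\prstart{\tau_z > s}{\pi}} \;=\; \frac{A(s)}{B(s)}, \qquad A(s) := \sum_k \frac{c_k}{\mu_k} e^{-\mu_k s}, \quad B(s) := \sum_k c_k e^{-\mu_k s},
\]
differentiation yields $f'(s)\,B(s)^2 \;=\; A(s) \bigl(\sum_k c_k \mu_k e^{-\mu_k s}\bigr) - B(s)^2$, which is non-negative by Cauchy--Schwarz applied with $a_k = \sqrt{c_k/\mu_k}\,e^{-\mu_k s/2}$ and $b_k = \sqrt{c_k \mu_k}\,e^{-\mu_k s/2}$ (so that $a_k b_k = c_k e^{-\mu_k s}$). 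Hence $f$ is non-decreasing and bounded, and
\[
\sup_{s \ge 0} f(s) \;=\; \lim_{s \to \infty} f(s) \;=\; 1/\mu_1,
\]
where $\mu_1$ is the smallest exponent with $c_1 > 0$.

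Third, the lemma then reduces to the inequality $1/\mu_1 \le \estart{\tau_z}{\pi} + \trel$, which I would quote directly as the main bound of \cite{AldBrown}. The main obstacle is precisely this last step: steps one and two are essentially bookkeeping once the spectral representation is in hand, but controlling the slowest exponential rate of the chain killed at $z$ by the mean hitting time plus the relaxation time of the full chain is delicate and relies on a careful eigenvalue interlacing/perturbation argument comparing $P$ to its restriction to $\Omega \setminus \{z\}$. I would treat the Aldous--Brown inequality as a black box rather than reprove it.
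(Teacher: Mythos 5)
Your proposal is correct and follows essentially the same route as the paper: both use the complete monotonicity of $\prstart{\tau_z>t}{\pi}$ as a non-negative mixture of exponentials, show $f$ is increasing by differentiating the ratio (your Cauchy--Schwarz computation fills in the step the paper calls "straightforward differentiation"), identify $\sup_s f(s)$ with the reciprocal of the slowest decay rate (which the paper recognizes as the quasistationary mean hitting time $\estart{\tau_z}{\alpha}$), and then quote the Aldous--Brown bound $\estart{\tau_z}{\alpha}\leq\estart{\tau_z}{\pi}+\trel$ as a black box.
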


\begin{proof}[\bf Proof]

Let $Q=I-P$ and $Q_z$ be the restriction of $Q$ to $\Omega\setminus\{z\}$. We recall the complete positivity of the law of $\tau_z$ starting from $\pi$ (see for instance~\cite[eqn. (18)]{AldBrown}): there exist non-negative constants $(p_i, 1\leq i\leq n)$ such that for all $t$
\begin{align*}
\prstart{\tau_z>t}{\pi} = \sum_{i=1}^{m} p_i e^{-\gamma_i t},
\end{align*}
where $0<\gamma_1<\ldots<\gamma_m$ are the distinct eigenvalues of $-Q_z$. Note that $\gamma_1^{-1}=\estart{\tau_z}{\alpha}$, where $\alpha$ is any quasistationary distribution on $\Omega\setminus\{z\}$ corresponding to the eigenvalue $\gamma_1$.

Using the above representation we can rewrite $f$ as follows
\begin{align*}
f(s) = \int_{s}^{\infty} \frac{\prstart{\tau_z>t}{\pi}}{\prstart{\tau_z>s}{\pi}}\,dt =\frac{\sum_{i=1}^{m}p_i e^{-\gamma_i s}/\gamma_i}{\sum_{i=1}^{m}p_i e^{-\gamma_i s}}.
\end{align*}
A straightforward differentiation now gives that $f$ is increasing. From the above expression we also deduce
\begin{align}\label{eq:limitf}
\lim_{s\to\infty} f(s) =\frac{1}{\gamma_1} = \estart{\tau_z}{\alpha}.
\end{align}
From~\cite[Corollary~4]{AldBrown} we have
\[
\estart{\tau_z}{\alpha}\leq \estart{\tau_z}{\pi} + \trel.
\]
Therefore, using this and the fact that $f$ is increasing and~\eqref{eq:limitf} we conclude that for all $s$
\[
f(s) \leq \estart{\tau_z}{\alpha}\leq \estart{\tau_z}{\pi}+ \trel
\]
which completes the proof.
\end{proof}

The next lemma essentially improves upon \lemref{smalltime1} from Section~\ref{sec:firsthit}.

\begin{lemma}\label{lem:smalltimetransitive}
Suppose that $P$ is reversible and transitive. Then for any $x\in\Omega$, there exists a subset~$A_x\subset \Omega$ with $\pi(A_x)\geq 1/2$ such that, for any $\theta>0$,
$$\frac{1}{\pi(A_x)}\sum_{z\in A_x}\pi(z)\Prp{x}{\tau_z\leq \theta\,t_{\rm hit}} \leq 6\sqrt{\theta}$$\end{lemma}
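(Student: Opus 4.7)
The plan is to let $A_x$ be the set furnished by \lemref{smalltime1} transported to $x$ via a graph automorphism, and then bound the restricted average in two different ways — pointwise from \lemref{smalltime1}, or globally via Aldous--Brown plus transitivity's hitting-time symmetry — and take whichever is sharper at the given $\theta$.

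\lemref{smalltime1} produces a state $x_0\in\Omega$ and a subset $A_0\subset\Omega$ with $\pi(A_0)\geq 1/2$ satisfying $\Prp{x_0}{\tau_{A_0}>t}\geq e^{-t/t_{\rm rel}}$ for all $t>0$. Since $\tau_z\geq\tau_{A_0}$ for every $z\in A_0$, this gives the pointwise bound $\Prp{x_0}{\tau_z\leq t}\leq 1-e^{-t/t_{\rm rel}}$ on $A_0$. Given any $x\in\Omega$, transitivity supplies an automorphism $\phi$ with $\phi(x_0)=x$; the set $A_x:=\phi(A_0)$ then has $\pi(A_x)\geq 1/2$ and inherits the pointwise bound, so $\Prp{x}{\tau_z\leq t}\leq 1-e^{-t/t_{\rm rel}}$ for all $z\in A_x$ and $t>0$.

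For each $\theta>0$, setting $s:=\theta t_{\rm hit}$, the restricted average can be bounded in two ways. Directly from the previous display, it is at most $1-e^{-s/t_{\rm rel}}$. Alternatively, by \lemref{knownfacts}'s symmetry $\Prp{x}{\tau_z\leq s}=\Prp{z}{\tau_x\leq s}$ one has $\sum_z\pi(z)\Prp{x}{\tau_z\leq s}=\Prp{\pi}{\tau_x\leq s}$, and transitivity forces $\estart{\tau_x}{\pi}=t^*_{\rm hit}$ for every $x$. Decomposing $\tau_x=\min(\tau_x,s)+(\tau_x-s)_+$ and using \lemref{AldBrown}'s bound $f(s)\leq t^*_{\rm hit}+t_{\rm rel}$ then gives
\[
\Prp{\pi}{\tau_x\leq s}\leq \frac{s+t_{\rm rel}}{t^*_{\rm hit}+t_{\rm rel}}.
\]
Dividing by $\pi(A_x)\geq 1/2$ and using $t_{\rm hit}\leq 2t^*_{\rm hit}$ from \lemref{knownfacts} turns this into a bound of $4\theta+4\beta$ on the restricted average, where $\beta:=t_{\rm rel}/t_{\rm hit}$.

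It remains to check that $\min\bigl(1-e^{-\theta/\beta},\,4\theta+4\beta\bigr)\leq 6\sqrt{\theta}$ for all $\theta,\beta>0$. If $\beta\leq \tfrac{3}{2}\sqrt{\theta}-\theta$, the second quantity is at most $4\theta+4\bigl(\tfrac{3}{2}\sqrt{\theta}-\theta\bigr)=6\sqrt{\theta}$; and if $\beta\geq \sqrt{\theta}/6$, then $1-e^{-\theta/\beta}\leq \theta/\beta\leq 6\sqrt{\theta}$ via $1-e^{-u}\leq u$. Since $\tfrac{3}{2}-\tfrac{1}{6}=\tfrac{4}{3}$, these two regimes cover every $\beta>0$ when $\sqrt{\theta}\leq \tfrac{4}{3}$, and for $\theta>16/9$ the first bound is at most $1<6\sqrt{\theta}$. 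The hardest part is precisely this final elementary case analysis: each bound by itself is easy, but one has to check carefully that the two regimes of $\beta$ overlap with enough slack to recover exactly the constant $6$, which is why the transported \lemref{smalltime1} set is a better choice than any cruder averaging construction.
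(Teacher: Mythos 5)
Your proposal is correct and follows essentially the same route as the paper: both use \lemref{smalltime1} (transported by transitivity) for the pointwise exponential bound on $A_x$, and \lemref{AldBrown} combined with the hitting-time symmetry of \lemref{knownfacts} for the averaged bound, then reconcile the two via a dichotomy on $t_{\rm rel}/t_{\rm hit}$ versus $\sqrt{\theta}$. The only differences are presentational (taking a minimum of two bounds with slightly different thresholds rather than the paper's single case split at $t_{\rm rel}=\sqrt{\theta}\,t_{\rm hit}$), and your intermediate computations all check out.
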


\begin{proof}[\bf Proof]

By \lemref{smalltime1} there exists $x\in \Omega$ and a set~$A=A_x$ with $\pi(A_x)\geq  1/2$ such that 
$$\Prp{x}{\tau_A>t}\geq e^{-\frac{t}{t_{\rm rel}}}.$$
Since the chain is transitive, this in fact holds for all $x$. We now fix $x\in \Omega$ and $\theta>0$.  

We will consider two cases separately: $t_{\rm rel}< \sqrt{\theta}\,t_{\rm hit}$ and $t_{\rm rel}\geq \sqrt{\theta}\,t_{\rm hit}$.

Suppose first that $t_{\rm rel}\geq  \sqrt{\theta}\,t_{\rm hit}$. Since $\Prp{x}{\tau_z>t}\geq \Prp{x}{\tau_A>t}$ for all $z\in A$, we obtain
$$\frac{1}{\pi(A)}\sum_{z\in A}\pi(z)\,\Prp{x}{\tau_z>\theta\,t_{\rm hit}}\geq e^{-\frac{\theta\,t_{\rm hit}}{t_{\rm rel}}}\geq 1-\frac{\theta\,t_{\rm hit}}{t_{\rm rel}}\geq 1-\sqrt{\theta}$$
which concludes the proof in this case.

Suppose next that $t_{\rm rel}< \sqrt{\theta}\,t_{\rm hit}$. In this case it suffices to prove
\begin{equation}\label{eq:goalcase1}\forall z\in \Omega\,:\,\Prp{\pi}{\tau_z>\theta\,t_{\rm hit}}\geq 1 - 3\sqrt{\theta}.\end{equation}
To see that this suffices, we use the fact that $P$ is transitive and apply Lemma~\ref{lem:knownfacts} to obtain that $\Prp{x}{\tau_z>t}$ is symmetric in $x$ and $z$. As a result, \eqnref{goalcase1} implies 
$$1-3\sqrt{\theta}\leq \Prp{\pi}{\tau_x>\theta\,t_{\rm hit}} = \sum_{z\in \Omega}\pi(z)\Prp{z}{\tau_x>\theta\,t_{\rm hit}}=\sum_{z\in \Omega}\pi(z)\Prp{x}{\tau_z>\theta\,t_{\rm hit}}.$$
Therefore we obtain
\[
\sum_z \pi(z) \Prp{x}{\tau_z\leq \theta\,t_{\rm hit}}\leq 3\sqrt{\theta}.
\]
Since $\pi(A)\geq 1/2$, we conclude
\[
\frac{1}{\pi(A)}\sum_{z\in A}\pi(z) \Prp{x}{\tau_z\leq \theta\,t_{\rm hit}} \leq 6\sqrt{\theta}.
\]
It remains to prove~\eqref{eq:goalcase1}. Since $P$ is transitive, $\Exp{\pi}{\tau_z}=t^{*}_{\rm hit}$ is independent of $z$. Moreover, we are assuming that $t_{\rm rel}\leq \sqrt{\theta}\,t_{\rm hit}$, and hence using Lemma~\ref{lem:knownfacts} we get that $t_{\rm rel}\leq 2\sqrt{\theta}\,t^{*}_{\rm hit}$
so 
\begin{align}\label{eq:upperboundexp}
\Exp{\pi}{\tau_z} + t_{\rm rel}\leq (1+2\sqrt{\theta})\,t^{*}_{\rm hit}.
\end{align}
We now obtain for all $s\geq 0$ and for all $z\in \Omega$
\begin{eqnarray*}
t^{*}_{\rm hit} = \Exp{\pi}{\tau_z}\leq s + \Exp{\pi }{\tau_z-s\mid \tau_z>s}\,\Prp{\pi}{\tau_z>s}
&\leq & s+(\estart{\tau_z}{\pi}+\trel)\prstart{\tau_z>s}{\pi}
\\ &\leq & s + (1+  2\sqrt{\theta})\,t^{*}_{\rm hit}\,\Prp{\pi}{\tau_z>s},\end{eqnarray*}
where the second inequality follows from Lemma~\ref{lem:AldBrown} and the final one from~\eqref{eq:upperboundexp}.
Taking $s=\theta\,t^{*}_{\rm hit}$ gives
\begin{equation*} 
\Prp{\pi}{\tau_z>\theta\,t^{*}_{\rm hit}} \geq \frac{1-\theta}{1+2\sqrt{\theta}} \geq 1-2\sqrt{\theta}.
\end{equation*}
This now finishes the proof of~\eqref{eq:goalcase1}, since for all $z\in \Omega$ we have 
\begin{align*}
\prstart{\tau_z>\theta \thit}{\pi}\geq \prstart{\tau_z> 2 \theta \thit^{*}}{\pi} \geq 1 - 2\sqrt{2}\sqrt{\theta} \geq 1-3\sqrt{\theta},
\end{align*}
where for the first inequality we used again Lemma~\ref{lem:knownfacts}.\end{proof}

\subsubsection{Distributional identities for meeting and hitting times}

Our next result shows that the distributions of hitting and meeting times are intimately related for transitive and reversible $P$.  

\begin{lemma}\label{lem:identity}

Let $P$ be a reversible and transitive transition matrix. Let~$X, Y$ and $Z$ be three independent continuous time Markov chains with speeds $\lambda_X=1, \lambda_Y\geq 0$ and $\lambda_Z\geq 0$ and transition matrix $P$. Then for all $(x,z)\in\Omega^2$, the distribution of $\frac{\tau^X_z}{\lambda_Y + \lambda_Z}$ under $\Prpwo{x}$ is the same as the distribution of $M^{Y,Z}$ under $\Prpwo{(x,z)}$. 
\end{lemma}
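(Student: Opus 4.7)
\textbf{Proof plan for \lemref{identity}.}

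The plan is to reduce the identity in law to a discrete-time statement via a common Poisson clock, prove the discrete statement by matching all higher-order ``joint coincidence'' marginals, and then use inclusion--exclusion. Two special features of transitive reversible chains are essential: $\pi$ is uniform on $\Omega$, so $P$ is a symmetric matrix, and $P^m(u,u)$ is independent of $u\in\Omega$ for every $m\ge 0$. Assume $x\neq z$ and $\lambda_Y+\lambda_Z>0$; the remaining cases are trivial. I would build $Y$ and $Z$ jointly from a single Poisson clock of rate $\lambda_Y+\lambda_Z$ with jump times $T_1<T_2<\cdots$, together with iid $\{Y,Z\}$-valued coins $\varepsilon_1,\varepsilon_2,\ldots$, each equal to $Y$ with probability $\lambda_Y/(\lambda_Y+\lambda_Z)$; at the $k$-th tick, walker $\varepsilon_k$ takes a $P$-step. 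Writing $\widetilde Y_k,\widetilde Z_k$ for the discrete skeletons and $H:=\inf\{k\ge 0:\widetilde Y_k=\widetilde Z_k\}$, one has $M^{Y,Z}=T_H$. Analogously, the speed-$(\lambda_Y+\lambda_Z)$ chain $W$ from $x$ decomposes as an independent Poisson clock of the same rate together with a discrete $P$-chain skeleton $\widetilde W$; setting $H':=\inf\{k\ge 0:\widetilde W_k=z\}$, a time change gives $\tau^W_z\eqdist\tau^X_z/(\lambda_Y+\lambda_Z)$, and because the Poisson clock is independent of the skeleton in both constructions it suffices to prove $H\eqdist H'$.

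The heart of the argument is the identity: for any $0<k_1<k_2<\cdots<k_m$,
\[
\Pr{\widetilde Y_{k_j}=\widetilde Z_{k_j}\ \forall\, j}
\;=\;P^{k_1}(x,z)\prod_{j=2}^m P^{k_j-k_{j-1}}(z,z)
\;=\;\Pr{\widetilde W_{k_j}=z\ \forall\, j}.
\]
I would condition on the coin sequence $(\varepsilon_k)$ so that the $Y$- and $Z$-step counts $n_Y(k_j),n_Z(k_j)$ become deterministic, and expand the left-hand probability as a sum over $(u_1,\ldots,u_m)\in\Omega^m$ of products of $P^{n_Y(k_j)-n_Y(k_{j-1})}(\cdot,\cdot)$ and $P^{n_Z(k_j)-n_Z(k_{j-1})}(\cdot,\cdot)$ transition entries. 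Summing over $u_m,u_{m-1},\ldots,u_1$ in turn, the symmetry of $P$ lets me reverse the $Z$-factor and collapse $\sum_{u_j}P^a(u_{j-1},u_j)P^b(u_j,u_{j-1})=P^{a+b}(u_{j-1},u_{j-1})$; transitivity then replaces the resulting diagonal entry $P^{k_j-k_{j-1}}(u_{j-1},u_{j-1})$ by the constant $P^{k_j-k_{j-1}}(z,z)$. The leftover sum over $u_1$ produces $P^{k_1}(x,z)$. The final expression is independent of the coin sequence, so the identity holds unconditionally; the right-hand equality then follows from the Markov property of $\widetilde W$.

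To conclude, inclusion--exclusion yields $\Pr{H>n}=\sum_{S\subseteq\{1,\ldots,n\}}(-1)^{|S|}\Pr{\widetilde Y_k=\widetilde Z_k\ \forall k\in S}$ and the same sum computes $\Pr{H'>n}$, so $H\eqdist H'$ and hence $M^{Y,Z}\eqdist\tau^X_z/(\lambda_Y+\lambda_Z)$. The main obstacle is the joint-probability identity above: the one-dimensional marginals are a straightforward consequence of symmetry (immediately yielding $\Pr{\widetilde Y_k=\widetilde Z_k}=P^k(x,z)$), but the higher-order correlations only ``close up'' because transitivity makes every diagonal entry $P^m(u,u)$ equal to $P^m(z,z)$; without transitivity the iterated collapse leaves behind $u$-dependent factors and the identity fails in general.
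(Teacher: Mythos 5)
Your argument is correct, but it is genuinely different from the one in the paper. The paper proves \lemref{identity} by showing that the two families of distribution functions $t\mapsto\Prp{x}{\tau_z\leq(\lambda_Y+\lambda_Z)t}$ and $t\mapsto\Prp{(x,z)}{M^{Y,Z}\leq t}$ satisfy the \emph{same} linear system of Kolmogorov-type ODEs with the same initial data, and then invokes uniqueness for linear ODEs; the role of transitivity there is to supply the symmetry $\Prp{x}{\tau_z\leq t}=\Prp{z}{\tau_x\leq t}$, which lets one rewrite the forward equation for the single chain as a convex combination of ``move the $x$-coordinate'' and ``move the $z$-coordinate'' terms, reproducing exactly the generator of the product chain $(Y,Z)$ killed on the diagonal. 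You instead Poissonize with a common rate-$(\lambda_Y+\lambda_Z)$ clock, reduce to a discrete-time statement $H\eqdist H'$, and establish it by matching \emph{all} finite-dimensional coincidence probabilities $\Pr{\widetilde Y_{k_j}=\widetilde Z_{k_j}\ \forall j}=P^{k_1}(x,z)\prod_{j\ge2}P^{k_j-k_{j-1}}(z,z)$ followed by inclusion--exclusion. I checked the collapse step: conditioning on the coins, reversing the $Z$-factors via symmetry of $P$ (valid since transitivity forces $\pi$ uniform), telescoping $\sum_{u_j}P^{a}(u_{j-1},u_j)P^{b}(u_j,u_{j-1})=P^{a+b}(u_{j-1},u_{j-1})$, and replacing the diagonal entries by $P^{k_j-k_{j-1}}(z,z)$ via transitivity all go through, and the result is coin-independent as you claim. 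Your route is more combinatorial and avoids ODE theory; it also proves something strictly stronger (the full joint law of the coincidence indicators of $(Y,Z)$ matches that of the return/hit indicators of $W$), and it isolates the precise inputs needed (symmetric $P$ plus constant diagonal of every $P^m$). The paper's route is shorter once the symmetry lemma is in hand and generalizes more transparently to the continuous-time generator picture used elsewhere in the paper. The only points worth tightening in a write-up are the degenerate cases you wave at ($x=z$, and $\lambda_Y=\lambda_Z=0$, where both sides should be read as $+\infty$ a.s.\ for $x\neq z$) and an explicit remark that $T_H$ and $T'_{H'}$ have the same law because clock and skeleton are independent in both constructions.
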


A special case of this lemma is when $\lambda_X=\lambda_Z=1$, in which case we obtain the following corollary of \lemref{identity}.
$$\Exp{(x,z)}{M^{X,Z}} = \frac{1}{2}\,\Exp{x}{\tau^X_z}.$$
This equality is well known and is usually proven by martingale methods such as the ones used in the proof of \thmref{put}. Somewhat oddly, it seems that \lemref{identity} is new, or at least was not widely known before. We also note the following corollary of \lemref{smalltimetransitive} and \lemref{identity}.

\begin{corollary}\label{cor:smalltimetransitive}
Let $P$ be a transitive and reversible transition matrix and let $X,Y$ and $Z$ be three independent continuous time Markov chains with speeds $\lambda_X=1$ and $\lambda_Y, \lambda_Z\geq 0$ respectively and transition matrix $P$. Then for all $x\in \Omega$ there exists a subset $A_x\subset\Omega$ with $\pi(A_x)\geq 1/2$ such that:
$$\frac{1}{\pi(A_x)}\,\sum_{z\in A_x}\,\pi(z)\,\Prp{(x,z)}{M^{X,Z}\leq \theta\,t_{\rm hit}}\leq 6\sqrt{(1+\lambda_Z)\,\theta}$$
and
$$\frac{1}{\pi(A_x)}\,\sum_{z\in A_x}\,\pi(z)\,\Prp{(x,z)}{M^{Y,Z}\leq \theta\,t_{\rm hit}}\leq 6\sqrt{(\lambda_Y+\lambda_Z)\,\theta}.$$\end{corollary}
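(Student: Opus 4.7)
The corollary looks like it should be essentially immediate from \lemref{smalltimetransitive} and \lemref{identity}, so my plan is just to combine them carefully, with the main bookkeeping being a substitution of parameters.

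The first step is to fix $x \in \Omega$ and let $A_x \subset \Omega$ be the subset produced by \lemref{smalltimetransitive} applied to a speed-$1$ chain with transition matrix $P$; this subset depends only on $x$ (and $P$), satisfies $\pi(A_x) \geq 1/2$, and gives the bound
$$\frac{1}{\pi(A_x)}\sum_{z \in A_x}\pi(z)\,\Prp{x}{\tau_z \leq \theta'\,t_{\rm hit}} \leq 6\sqrt{\theta'}$$
for every $\theta' > 0$. I will show that this same set $A_x$ works for both inequalities.

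For the first inequality, apply \lemref{identity} to a configuration where the role of the speed-$\lambda_Y$ chain is played by a speed-$1$ chain (so the sum $\lambda_Y + \lambda_Z$ there becomes $1 + \lambda_Z$ here) and the role of the speed-$\lambda_Z$ chain is played by our chain $Z$ of speed $\lambda_Z$. That lemma yields the distributional identity
$$\Prp{(x,z)}{M^{X,Z} \leq \theta\,t_{\rm hit}} \;=\; \Prp{x}{\tau_z \leq (1+\lambda_Z)\,\theta\,t_{\rm hit}}$$
for every $z \in \Omega$. Averaging over $z \in A_x$ with weights $\pi(z)/\pi(A_x)$ and then applying \lemref{smalltimetransitive} with $\theta' = (1+\lambda_Z)\,\theta$ gives exactly the first stated inequality, with constant $6\sqrt{(1+\lambda_Z)\,\theta}$.

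For the second inequality the argument is even more direct: \lemref{identity}, applied with the original speeds $\lambda_Y$ and $\lambda_Z$, gives
$$\Prp{(x,z)}{M^{Y,Z} \leq \theta\,t_{\rm hit}} \;=\; \Prp{x}{\tau_z \leq (\lambda_Y + \lambda_Z)\,\theta\,t_{\rm hit}},$$
and averaging over the same $A_x$ followed by \lemref{smalltimetransitive} with $\theta' = (\lambda_Y + \lambda_Z)\,\theta$ finishes the proof. There is no real obstacle here: the only thing to verify is that the choice of $A_x$ in \lemref{smalltimetransitive} is universal in $\theta$ (so that the same set handles both coefficients $1+\lambda_Z$ and $\lambda_Y + \lambda_Z$), which is evident from the proof of that lemma since $A_x$ is constructed from spectral data of $P$ alone and the tail bound holds simultaneously for all $\theta > 0$.
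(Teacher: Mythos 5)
Your proposal is correct and is exactly the derivation the paper intends: the paper states the result as an immediate consequence of Lemma~\ref{lem:smalltimetransitive} and Lemma~\ref{lem:identity} without writing out the details, and your substitutions ($\theta' = (1+\lambda_Z)\theta$ for the pair $(X,Z)$ and $\theta' = (\lambda_Y+\lambda_Z)\theta$ for $(Y,Z)$, using the same spectrally-defined $A_x$ for all $\theta$) are precisely the intended bookkeeping.
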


\begin{proof}[\bf Proof of Lemma~\ref{lem:identity}]
Define the functions
$$g_{(x,z)}(t):=\Prp{x}{\tau_z\leq (\lambda_Y+\lambda_Z)\,t}\mbox{ and }f_{x,z}(t):=\Prp{(x,z)}{M^{Y,Z}\leq t}\,\,((x,z)\in\Omega^2,\,t\geq 0).$$
We will be done once we show that $g_{x,z}(t) = f_{(x,z)}(t)$ for all $(x,z)\in\Omega^2$ and $t\geq 0$. These equalities are true (by inspection) when $t=0$. We are going to show that the functions $(f_{(x,z)}(\cdot))_{(x,z)\in\Omega^2}$ and $(g_{x,z}(\cdot))_{(x,z)\in\Omega^2}$ satisfy the same linear system of ordinary differential equations (with the derivatives at $t=0$ interpreted as right derivatives). Then the equality for all $t\geq 0$ will follow from the general uniqueness theory of linear ODE's.

To prove that $f$ and $g$ satisfy the same system of ODE's,  we will use a standard formula for the cumulative distribution function of a hitting time. If $(V_t)_{t\geq 0}$ is an irreducible continuous time Markov chain over a set $\Omega_V$ with transition rates $q(v,w)$, and $A\subset \Omega_V$ is a nonempty subset of the state space, the hitting time $\tau^V_A$ of $A$ by $V$ satisfies
\begin{equation}\label{eq:diffhitting}\frac{d}{dt}\Prp{v}{\tau^V_A\leq t} = \left\{\begin{array}{ll}0, & v\in A;\\ \sum_{w\in V}q(v,w)\,(\Prp{w}{\tau^V_A\leq t} -\Prp{v}{\tau^V_A\leq t}), & v\in \Omega_V\backslash A.\end{array}\right.\end{equation}
The derivative is understood as a right derivative at time $t=0$.

Let $\Delta = \{(x,x)\,:\,x\in\Omega\} \subset \Omega^2$ be the diagonal set. 
We first apply \eqnref{diffhitting} to the product chain $(V_t)_{t\geq 0} = (Y_t,Z_t)_{t\geq 0}$, with $\Omega_V=\Omega^2$, and $A=\Delta$. In this case $\tau^V_A=M^{Y,Z}$, and a straightforward computation with the transition rates gives:
\begin{equation}\label{eq:systemforf}\frac{d}{dt}\,f_{x,z}(t)  = \left\{\begin{array}{ll}0,& x=z\\ \lambda_Y\sum_{x'\in \Omega}P(x,x')\,(f_{(x',z)}(t)-f_{(x,z)}(t)) & \\ + \lambda_Z\sum_{z'\in \Omega}P(z,z')\,(f_{(x,z')}(t)-f_{(x,z)}(t)), & x\neq z.\end{array}\right.\end{equation}
We now apply the same formula \eqnref{diffhitting} with $(V_t)_{t\geq 0}=(X_t)_{t\geq 0}$. Note that $g_{(x,z)}(t):=\Prp{x}{\tau_z\leq s(t)}$ where $s(t) = (\lambda_Y+\lambda_Z)\,t$, so the chain rule implies
\begin{equation}\label{eq:derivativetau1}\frac{d}{dt}\,g_{(x,z)}(t)= \left\{\begin{array}{ll}0,& x=z\\ (\lambda_Y+\lambda_Z)\,\sum_{x'\in \Omega}P(x,x')\,(g_{(x',z)}(t) - g_{(x,z)}(t)), & x\neq z.\end{array}\right.\end{equation}
{\em We will now make crucial use of transitivity}, which allows us to use \lemref{knownfacts} to deduce that $\Prp{x}{\tau^X_z\leq (\lambda_Y+\lambda_Z)\,t}$ is symmetric in $x$ and $z$, i.e.
$$\Prp{x}{\tau^X_z\leq (\lambda_Y+\lambda_Z)\,t} = \Prp{z}{\tau^X_x\leq (\lambda_Y+\lambda_Z)\,t},$$ that is $g_{x,z}(\cdot) = g_{z,x}(\cdot)$ for all $x,z$. This allows us to reverse the roles of $x $ and $z$ in \eqnref{derivativetau1} to obtain:
\begin{equation}\label{eq:derivativetau2}\frac{d}{dt}\,g_{(x,z)}(t)= \left\{\begin{array}{ll}0,& x=z\\ (\lambda_Y+\lambda_Z)\,\sum_{z'\in \Omega}P(z,z')\,(g_{(x,z')}(t) - g_{(x,z)}(t)), & x\neq z.\end{array}\right.\end{equation}
We add the two formulas \eqnref{derivativetau1} and \eqnref{derivativetau2} with weights $\lambda_Y/(\lambda_Y+\lambda_Z)$ and $\lambda_Z/(\lambda_Y+\lambda_Z)$ respectively. The upshot is:
\begin{equation*}
\frac{d}{dt}\,g_{(x,z)}(t)  = \left\{\begin{array}{ll}0,& x=z\\ \lambda_Y\sum_{x'\in \Omega}P(x,x')\,(g_{(x',z)}(t)-g_{(x,z)}(t)) & \\ + \lambda_Z\sum_{z'\in \Omega}P(z,z')\,(g_{(x,z')}(t)-g_{(x,z)}(t)), & x\neq z.\end{array}\right.
\end{equation*}
This is precisely the system of ODEs we obtained for the $f$'s in \eqnref{systemforf} and it concludes the proof.
\end{proof}

\section{The general lower bound}\label{sec:redmond}

In this section we prove \thmref{redmond}.

\begin{proof}[\bf Proof of \thmref{redmond}] 

We let $n=|\Omega|$ denote the number of states. The transitivity assumption implies $\pi(v)=1/n$ for all $v\in\Omega$. The next lemma will be used in the proof. We defer its proof until Section~\ref{sec:occupation}.

\begin{lemma}\label{lem:upperboundRHS} 
Let $P$ be a reversible and transitive transition matrix. 
Let $X, Y$ and $Z$ be three independent continuous time chains with transition matrix $P$ and speeds $\lambda_X=1$ and $\lambda_Y,\lambda_Z\geq 0$. Let $\mu$ be the probability measure given by
\begin{align}\label{eq:defmu}
\mu(A):= \prcond{(X_{\Mgood}, Y_{\Mgood}, Z_{\Mgood})\in A}{\Mgood<\Mbad}{} \quad \text{for } A\subseteq \Omega\times \Omega\times \Omega.
\end{align}
Then
$$\int_{0}^{+\infty}\Prp{\mu}{X_t=Y_t,t<\Mbad}\,dt\leq \frac{22\,t_{\rm hit}}{n}.$$\end{lemma}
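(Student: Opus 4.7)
My plan is to express $I$ as an expected occupation time and split the bound into two pieces: an expected value of $\Mbad$ and a stationary-probability factor for the diagonal. By Fubini,
$$
I \;=\; \mathbb{E}_\mu\!\left[\int_0^{\Mbad}\mathbf{1}(X_t = Y_t)\,dt\right],
$$
which is the expected amount of time the product chain $(X,Y)$ spends on the diagonal $\Delta = \{(x,x):x\in\Omega\}\subset\Omega^2$ before the stopping time $\Mbad$. Since $P$ is transitive, $\pi$ is uniform and $(\pi\otimes\pi)(\Delta)=1/n$, so the natural target bound is of the order $\mathbb{E}_\mu[\Mbad]/n$.

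To bound $\mathbb{E}_\mu[\Mbad]$ I would use $\Mbad\leq M^{X,Z}$ together with Lemma~\ref{lem:meetingtime}. Conditionally on the c\`adl\`ag $\Omega$-valued trajectory $(Z_t)_{t\geq 0}$, the random time $M^{X,Z}$ is precisely the hitting time $\tau_{\mathbf h}$ of the deterministic trajectory $\mathbf h := Z$ by the speed-$1$ chain $X$. Lemma~\ref{lem:meetingtime} then gives $\mathbb{E}_{X_0}[M^{X,Z}\mid Z]\leq 11\,t^{*}_{\rm hit}$, uniformly in the starting state $X_0$ and in the trajectory. Averaging over $(X_0,Z)\sim\mu$ and using $t^{*}_{\rm hit}\leq t_{\rm hit}$ from Lemma~\ref{lem:knownfacts} yields $\mathbb{E}_\mu[\Mbad]\leq 11\,t_{\rm hit}$.

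For the occupation-time piece I would invoke the occupation time formula for product chains referenced in the introduction as Lemma~\ref{lem:stop}. The expected output is a bound of the shape
$$
\mathbb{E}_\mu\!\left[\int_0^{\Mbad}\mathbf{1}(X_t = Y_t)\,dt\right]\;\leq\;\frac{2}{n}\,\mathbb{E}_\mu[\Mbad],
$$
in which the factor $1/n = (\pi\otimes\pi)(\Delta)$ encodes the transitive stationary weight of the diagonal and the factor $2$ absorbs the deviation of $\mu$ from stationarity on $\Omega^3$. Multiplying this with the bound on $\mathbb{E}_\mu[\Mbad]$ above gives exactly $I \leq 22\,t_{\rm hit}/n$, as required.

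The hard part is the occupation-time step. Because $\mu$ forces $X_0 = Y_0$, the pair $(X,Y)$ is concentrated on $\Delta$ at time $0$ and is therefore very far from stationary on $\Omega^2$; the naive ``expected occupation equals stationary mass times expected stopping time'' identity does not apply. In addition, $\Mbad$ depends on $Z$ and is not adapted to $(X,Y)$ alone, so purely two-chain arguments will not close. I expect Lemma~\ref{lem:stop} to handle this by exploiting the special structure of $\mu$, which is itself built from the joint $(X,Y,Z)$-dynamics at the random time $\Mgood$: combining reversibility with a time-reversal or stationary-time identification should compare the non-stationary occupation starting from $\mu$ with a stationary occupation at the cost of at most the constant factor $2$.
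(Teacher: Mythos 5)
Your reduction of the integral to an expected diagonal occupation time, and your bound $\Exp{\mu}{\Mbad}\leq 11\,t^{*}_{\rm hit}$ obtained by conditioning on the trajectory of $Z$ and applying Lemma~\ref{lem:meetingtime}, are both correct and coincide with part of the paper's argument. The gap is in the occupation-time step. You rightly identify that the naive identity fails because under $\mu$ the pair $(X_0,Y_0)$ lies on the diagonal while $(X_{\Mbad},Y_{\Mbad})$ typically does not, but your proposed resolution --- that Lemma~\ref{lem:stop} yields
\[
\Exp{\mu}{\int_0^{\Mbad}\1(X_t=Y_t)\,dt}\;\leq\;\frac{2}{n}\,\Exp{\mu}{\Mbad}
\]
with ``a factor $2$ absorbing the deviation of $\mu$ from stationarity'' --- is an assertion, not an argument, and it misrepresents what Lemma~\ref{lem:stop} provides. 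That lemma is an \emph{exact} identity, $\Exp{\mu}{\int_0^\tau\1(V_t=v)\,dt}=\Exp{\mu}{\tau}\,\pi_V(v)$, valid only under the boundary condition that $V_0$ and $V_\tau$ have the same law under $\mu$; it contains no built-in slack factor for non-stationary data, and the ``time-reversal or stationary-time identification'' you invoke is never carried out. Your factorization $22=2\times 11$ therefore rests on an unproven inequality.

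The paper closes this gap by a different device: it replaces $\Mbad$ with the stopping time $\tau=\inf\{t\geq \Mbad:\,X_t=Y_t\}$ for the full process $(X,Y,Z)$. Since $X_t\neq Y_t$ for all $t\in[\Mbad,\tau)$, the diagonal occupation time up to $\tau$ equals that up to $\Mbad$, so nothing is lost; and now both $(X_0,Y_0)$ and $(X_\tau,Y_\tau)$ are uniform on the diagonal $\Delta$ (by automorphism-invariance of $\mu$ and of the law of $(X_\tau,Y_\tau)$, combined with transitivity), so condition (3) of Lemma~\ref{lem:stop} holds exactly with $V=(X,Y)$ and $U=Z$, giving $\sum_{x}\Exp{\mu}{\tau}/n^2=\Exp{\mu}{\tau}/n$. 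The price is paid in the expectation: $\Exp{\mu}{\tau}\leq \max\Exp{(x,y,z)}{\Mbad}+\max\Exp{(x',y',z')}{\Mgood}\leq 11\,t^{*}_{\rm hit}+11\,t^{*}_{\rm hit}$, so the constant $22$ arises as $11+11$, not $2\times 11$. To make your proof complete you would need to adopt this (or an equivalent) construction; as written, the key occupation-time inequality is unestablished.
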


Our proof is based on the analysis of the time that $X,Y$ spend on the diagonal $\Delta=\{(x,x): \, x\in \Omega\}$ prior to time $\Mbad$, i.e.
\begin{equation}\label{eq:defT}T:=\int_{0}^{\Mbad}\1(X_t=Y_t)\,dt = \int_{0}^{\infty}\1(X_t=Y_t,t<\Mbad)\,dt.\end{equation}
Note that $\Mgood<\Mbad$ if and only if $T>0$, so
\begin{align}\label{eq:longet}
\nonumber\E{T} &= \escond{T}{\Mgood<\Mbad}{}\pr{\Mgood<\Mbad}\\\nonumber &=\estart{\int_0^\infty \1(X_t=Y_t, t<\Mbad)\,dt}{\mu} \pr{\Mgood<\Mbad}\\
&=\pr{\Mgood<\Mbad}\int_0^\infty \prstart{X_t=Y_t,t<\Mbad}{\mu}\,dt. 
\end{align}
Lemma~\ref{lem:upperboundRHS} upper bounds the integral appearing above. Thus in order to obtain a lower bound for $\prstart{\Mgood<\Mbad}{}$ it suffices to lower bound 
\begin{align}\label{eq:ExpT}
\estart{T}{}= \int_0^\infty \prstart{X_t=Y_t, t<\Mbad}{}\,dt.
\end{align}
Using reversibility and the fact that $\pi$ is uniform for a transitive chain we obtain
\begin{align*}
\prstart{X_t=Y_t,t<\Mbad}{} &= \Prp{}{X_t=Y_t,\forall s\leq t,\, X_s\neq Z_s\mbox{ and }Y_s\neq Z_s} \\&=\Prp{}{X_0=Y_0,\forall s\leq t,\, X_s\neq Z_s\mbox{ and }Y_s\neq Z_s}\\
&=\sum_{x\in\Omega,z\in\Omega\backslash\{x\}}\frac{\Prp{(x,x,z)}{\forall s\leq t,\, X_s\neq Z_s\mbox{ and }Y_s\neq Z_s}}{n^3}\\
&=\sum_{x\in\Omega,z\in\Omega\backslash\{x\}}\frac{\Prp{(x,x,z)}{\Mbad>t}}{n^3} =  \frac{1}{n}\sum_{x,z\in\Omega}\pi(x)\,\pi(z)\,\Prp{(x,x,z)}{\Mbad>t}.
\end{align*}

Plugging this back into \eqnref{ExpT} gives
\begin{equation}\label{eq:ExpT2}\Exp{}{T} = \frac{1}{n}\sum_{x\in\Omega}\,\pi(x)\left(\int_{0}^{\infty}\sum_{z\in\Omega}\pi(z)\,\Prp{(x,x,z)}{\Mbad>t}\,dt\right).\end{equation}

At this point we recall that $\Mbad=M^{X,Y}\wedge M^{Y,Z}$, therefore
$$\forall (x,z)\in \Omega^2\,;\, \Prp{(x,x,z)}{\Mbad>t}\geq 1 -\Prp{(x,z)}{M^{X,Z}\leq t} - \Prp{(x,z)}{M^{Y,Z}\leq t}.$$
By \corref{smalltimetransitive}, for each $x\in\Omega$ there exists a subset $A_x$ with $\pi(A_x)\geq 1/2$ for which we have the following bound for all $t>0$
$$\sum_{z\in A_x}\pi(z)\Prp{(x,x,z)}{\Mbad>t}\geq \pi(A_x)\,\left[1 - 6(\sqrt{1+\lambda_Z} + \sqrt{\lambda_Y+\lambda_Z})\,\sqrt{\frac{t}{t_{\rm hit}}}\right].$$
Therefore, fixing $\xi>0$ and $x\in\Omega$:
\begin{eqnarray*}\int_{0}^{\infty}\sum_{z\in\Omega}\pi(z)\,\Prp{(x,x,z)}{\Mbad>t}\,dt &\geq &\frac{1}{2}\,\int_{0}^\xi \sum_{z\in A_x}\frac{\pi(z)}{\pi(A_x)}\,\Prp{(x,x,z)}{\Mbad>t}\,dt\\ &\geq & \frac{1}{2}\int_0^\xi\left[1 - 6(\sqrt{1+\lambda_Z} + \sqrt{\lambda_Y+\lambda_Z})\,\sqrt{\frac{t}{t_{\rm hit}}}\right]\,dt\\ &=& \frac{\xi}{2}\left[1 - 4\,(\sqrt{1+\lambda_Z} + \sqrt{\lambda_Y+\lambda_Z})\,\sqrt{\frac{\xi}{t_{\rm hit}}}\right].\end{eqnarray*}
We can maximize the right hand side by taking 
$$\xi := \frac{t_{\rm hit}}{36\,(\sqrt{1+\lambda_Z} + \sqrt{\lambda_Y+\lambda_Z})^2},$$
which gives the bound
$$\forall x\in\Omega\,:\, \int_{0}^{\infty}\sum_{z\in\Omega}\pi(z)\,\Prp{(x,x,z)}{\Mbad>t}\,dt \geq \frac{t_{\rm hit}}{216\,(\sqrt{1+\lambda_Z} + \sqrt{\lambda_Y+\lambda_Z})^2}.$$
Combining this with \eqnref{ExpT2} gives that 
\begin{align*}
\estart{T}{} \geq  \frac{t_{\rm hit}}{216\,n\,(\sqrt{1+\lambda_Z} + \sqrt{\lambda_Y+\lambda_Z})^2}.
\end{align*}
Therefore, using this together with Lemma~\ref{lem:upperboundRHS} and~\eqref{eq:longet} we conclude
\begin{align*}
\prstart{\Mgood<\Mbad}{}\geq \frac{1}{4752(\sqrt{1+\lambda_Z} + \sqrt{\lambda_Y+\lambda_Z})^2}
\end{align*}
and this finishes the proof.
\end{proof}

\subsection{Occupation time up to a stopping time}\label{sec:occupation}

The goal of this section is to prove Lemma~\ref{lem:upperboundRHS}. We start with a  more general setting. We give the proof of the lemma at the end of the section.

It is well known that a finite irreducible chain $(V_t)_{t\geq 0}$ with state space $\Omega_V$, started from a point $x$ and stopped at a stopping time $\tau>0$ with $V_\tau=x$, satisfies:
$$\forall v\in\Omega_V\,:\, \Exp{x}{\int_0^{\tau}\1(X_t=v)\,dt} = \pi_V(v)\,\Exp{x}{\tau},$$
where $\pi_V$ is the unique stationary measure of $(V_t)_{t\geq 0}$ (some simple conditions on $\tau$ are necessary for this). There are also extensions of this lemma to the case where $V_0$ and $V_\tau$ are not necessarily equal, but have the same distribution \cite[Proposition 2.4, Chapter 2]{AldFill}. The following lemma extends this idea even further, and shows that $\tau$ may be a stopping time for a ``larger"~Markov chain.

\begin{lemma}\label{lem:stop}Suppose $(U_t)_{t\geq 0}$, $(V_t)_{t\geq 0}$ are independent, irreducible, continuous time Markov chains with finite state spaces $\Omega_U$ and $\Omega_V$ respectively. 
Assume $\mu$ is a probability measure over $\Omega_U\times \Omega_V$ and that $\tau$ is a stopping time for the process $(U_t,V_t)_{t\geq 0} $ with the following properties.
\begin{enumerate}
\item[\rm (1)] $\Prpwo{\mu}(\tau>0)=1$;
\item [\rm (2)] $\Exp{\mu}{\tau}<\infty$.
\item [\rm (3)] $\Prp{\mu}{V_0=\cdot} = \Prp{\mu}{V_\tau=\cdot}.$
\end{enumerate}  Then for all $v\in \Omega_V$
$$\Exp{\mu}{\int_{0}^{\tau}\1(V_t=v)\,dt} = \Exp{\mu}{\tau}\,\pi_V(v)$$
where $\pi_V$ is the stationary distribution of $V$.
\end{lemma}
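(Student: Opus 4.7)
The plan is to show that the occupation measure $\nu(v) := \Exp{\mu}{\int_0^\tau \1(V_t = v)\,dt}$ is a stationary measure for the chain $V$, and then invoke uniqueness of the stationary distribution for an irreducible finite chain to conclude $\nu = \Exp{\mu}{\tau}\,\pi_V$.

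The key tool is Dynkin's formula: if $Q_V$ denotes the generator of $V$, then for any $f : \Omega_V \to \R$, the process $M_t^f := f(V_t) - f(V_0) - \int_0^t (Q_V f)(V_s)\,ds$ is a martingale in the natural filtration of $V$. Since $U$ and $V$ are independent, $M^f$ remains a martingale with respect to the joint filtration of $(U, V)$, which is the filtration in which $\tau$ is a stopping time. Finiteness of $\Omega_V$ makes $f$ and $Q_V f$ bounded, and combined with assumption (2) this will let me apply optional stopping at $\tau$ to obtain
$$\Exp{\mu}{f(V_\tau)} - \Exp{\mu}{f(V_0)} = \Exp{\mu}{\int_0^\tau (Q_V f)(V_s)\,ds}.$$

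Assumption (3) makes the left-hand side vanish. Exchanging the sum and the integral on the right then gives $\sum_{v \in \Omega_V} \nu(v)\,(Q_V f)(v) = 0$ for every $f : \Omega_V \to \R$, which is exactly the statement $\nu Q_V = 0$. Because $V$ is irreducible on a finite state space, non-negative stationary measures are unique up to scaling, so $\nu = C\,\pi_V$ for some $C \geq 0$; summing over $v$ and using that $\sum_v \1(V_t=v) = 1$ yields $C = \Exp{\mu}{\tau}$, which is the desired identity.

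The one delicate point is justifying optional stopping at $\tau$ in the second step; this is the main (and only) technical obstacle. The pathwise bound $|M^f_{t \wedge \tau}| \leq 2\|f\|_\infty + \|Q_V f\|_\infty\,\tau$ is integrable by assumption (2), so the stopped martingale is uniformly integrable and $\Exp{\mu}{M^f_\tau} = 0$ follows by standard arguments. No reversibility or transitivity of $V$ is needed; only the independence of $U$ and $V$ together with the three listed properties of $\tau$.
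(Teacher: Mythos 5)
Your proof is correct, and it reaches the paper's key intermediate claim --- that the occupation measure $\nu(v)=\Exp{\mu}{\int_0^\tau \1(V_t=v)\,dt}$ is invariant for $V$ --- by a genuinely different route. The paper proves $\nu Q_V=0$ by a bare-hands semigroup computation: it evaluates $\lim_{\eps\searrow 0}\eps^{-1}\left([\nu e^{\eps Q_V}](v)-\nu(v)\right)$, splitting the resulting integral into a shift term contributing $-\Prp{\mu}{V_0=v}$ and a boundary term supported on $\{\tau\le t\le \tau+\eps\}$ contributing $+\Prp{\mu}{V_\tau=v}$, the latter after an $O(\eps^2)$ estimate ruling out a jump of $V$ in a window of length $\eps$. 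You instead test $\nu$ against $Q_V f$ for arbitrary $f$ via Dynkin's formula and optional stopping at $\tau$, so that assumption (3) kills the boundary contribution $\Exp{\mu}{f(V_\tau)}-\Exp{\mu}{f(V_0)}$ in one stroke. The two arguments invoke the independence of $U$ and $V$ at the analogous point: the paper uses it to justify $\Prp{\mu}{V_{t+\eps}=v\mid V_t=w,\tau>t}=\Prp{w}{V_\eps=v}$, while you use it to check that the Dynkin martingale of $V$ remains a martingale in the enlarged filtration of $(U_t,V_t)_{t\geq 0}$, which is what legitimizes stopping at the $(U,V)$-stopping time $\tau$; you correctly flag this as the one delicate point, and your domination $|M^f_{t\wedge\tau}|\le 2\|f\|_\infty+\|Q_Vf\|_\infty\,\tau$ is exactly where hypothesis (2) enters. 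Your version is cleaner (no small-$\eps$ boundary analysis) and in fact never uses hypothesis (1), since $M^f_0=0$ regardless; the paper's version is more elementary in that it avoids appealing to the optional stopping theorem. Both proofs then conclude identically from uniqueness of the invariant measure of an irreducible finite-state chain.
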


\begin{proof}[\bf Proof]
In this proof we will interchange integrals, expectations and summations several times. Instead of justifying this at each step, we note right away that all of these interchanges are valid, because the integrands are non-negative. 

Consider the row vector $h$ with nonnegative coordinates
$$h(v):=\Exp{\mu}{\int_{0}^{\tau}\1(V_t=v)\,dt}=\int_0^{\infty}\,\Prp{\mu}{V_t=v,\tau>t}\,dt\;\;\;(v\in \Omega_V).$$
Note that $\sum_{v\in V}h(v) = \Exp{\mu}{\tau}>0$ because $\tau>0$ a.s.. Letting $Q$ be the generator of $(V_t)_{t\geq 0}$, we will show below that 
\begin{equation}\label{eq:goallocal}
hQ=0.
\end{equation} 
This identity implies that $h/\Exp{\mu}{\tau}$ is one invariant probability distribution for $V$. Since $\pi_V$ is the unique invariant distribution, we deduce that for all $v\in \Omega_V$
$$\frac{h(v)}{\Exp{\mu}{\tau}} = \pi_V(v),$$
which is precisely what we need to prove. 

We will derive $hQ=0$ from the limit
\begin{equation}\label{eq:limitformhQ}\forall v\in\Omega_V\,:\, h\,Q(v) = \lim_{\eps\searrow 0}\frac{[h\,e^{\eps\,Q}](v) - h(v)}{\eps}.\end{equation}
In order to compute the limit we recall $e^{\eps\,Q}(w,v) = \Prp{w}{X_\eps=v}$ for all $w,v\in\Omega_V$. Therefore
$$[h\,e^{\eps\,Q}](v) = \sum_{w\in \Omega_V}\,h(w)\,\Prp{w}{V_\eps=v} = \int_0^{\infty}\,\sum_{w\in \Omega_V}\Prp{\mu}{V_t=w,\tau>t}\,\Prp{w}{V_\eps=v}\,dt.$$
Crucially, the fact that $\tau$ is a stopping time for $U,V$ implies that the event $\{V_t=w,\tau>t\}$ is measurable with respect to $(U_s,V_s)_{s\leq t}$. Using that $V$ and $U$ evolve independently and the Markov property for $V$ implies 
\begin{eqnarray*}\Prp{\mu}{V_t=w,\tau>t}\,\Prp{w}{V_\eps=v} &=& \Prp{\mu}{V_t=w,\tau>t}\,\Prp{\mu}{V_{t+\eps}=v\mid V_t=w,\tau>t} \\ &=& \Prp{\mu}{V_t=w,V_{t+\eps}=v,\tau>t}.\end{eqnarray*}
Plugging this back in the previous display gives
\begin{eqnarray}\nonumber [h\,e^{\eps\,Q}](v) &=& \int_0^{\infty}\,\Prp{\mu}{V_{t+\eps}=v,\tau>t}dt \\ \nonumber &=& \int_0^{\infty}\,\Prp{\mu}{V_{t+\eps}=v,\tau>t+\eps}\,dt  + \int_0^{\infty}\,\Prp{\mu}{V_{t+\eps}=v,t\leq \tau\leq t+\eps}\,dt\\ \label{eq:twoterms} &=:& (I)+(II).\end{eqnarray}
The first term is
$$(I) =  \int_\eps^{\infty}\,\Prp{\mu}{V_{t}=v,\tau>t}\,dt  = h(v) - \int_{0}^{\eps}\Prp{\mu}{V_{t}=v,\tau>t}\,dt,$$
so
\begin{equation}\label{eq:limitI}\frac{(I) - h(v)}{\eps}\to -\Prp{\mu}{V_0=\mu,\tau>0} = -\Prp{\mu}{V_0=v}\end{equation}
because $\tau>0$ always. 
Regarding the second term, we have
\begin{align}\label{eq:iiterm}
(II) =  \int_0^\infty\prstart{V_t=v, V_\tau=v, \tau\leq t\leq \tau+\epsilon}{\mu}\,dt +\int_0^\infty\prstart{V_t=v, V_\tau\neq v, \tau\leq t\leq \tau+\epsilon}{\mu}\,dt. 
\end{align}
For the first term on the right hand side above we obtain
\begin{align}\label{eq:firsterm}
\lim_{\epsilon\to 0} \frac{\int_0^\infty\prstart{V_t=v, V_\tau=v, \tau\leq t\leq  \tau+\epsilon}{\mu}\,dt}{\epsilon} = \prstart{V_\tau=v}{\mu}.
\end{align} 
As for the second term in the sum in~\eqref{eq:iiterm} we get
\begin{align*}
\nonumber&\int_0^\infty\prstart{V_t=v, V_\tau\neq v, \tau\leq t\leq \tau+\epsilon}{\mu}\,dt
= \int_0^\infty \estart{\prcond{V_t=v,V_\tau\neq v, \tau\leq t\leq \tau+\epsilon}{\tau}{\mu}}{\mu} \,dt \\
&= \int_0^\infty \estart{\1(\tau\leq t\leq \tau+\epsilon) \prcond{V_t=v,V_\tau\neq v}{\tau}{\mu}}{\mu} \,dt.
\end{align*}
On the event $\{\tau\leq t\leq \tau+\epsilon\}$ in order 
to have $V_t=v$ and $V_\tau\neq v$, there must exist at least one jump of the Markov chain in the time interval $[\tau,t]$, which on this event has length less than $\epsilon$.
Therefore, we  obtain that  on the event $\{\tau\leq t\leq \tau+\epsilon\}$
\begin{align*}
\prcond{V_t=v,V_\tau\neq v}{\tau}{\mu} = O(\epsilon).
\end{align*}
Therefore we deduce
\begin{align*}
\int_0^\infty \estart{\1(\tau\leq t\leq \tau+\epsilon) \prcond{V_t=v,V_\tau\neq v}{\tau}{\mu}}{\mu}\,dt = O(\epsilon^2).
\end{align*}
Hence this together with~\eqref{eq:firsterm} gives that 
$$\frac{(II)}{\eps}\to  \Prp{\mu}{V_{\tau}=v}\mbox{ as }\eps\searrow 0.$$
Combining this with \eqnref{limitI} and \eqnref{twoterms} gives:
$$\frac{[h\,e^{\eps\,Q}](v) - h(v)}{\eps}\to \Prp{\mu}{V_{\tau}=v} - \Prp{\mu}{V_{0}=v}$$
Our assumption that $\Prp{\mu}{V_0=\cdot} = \Prp{\mu}{V_\tau=\cdot}$ implies that the right hand side above is zero. Plugging this back into \eqnref{limitformhQ} gives $hQ=0$ and finishes the proof.\end{proof}

\begin{proof}[\bf Proof of Lemma~\ref{lem:upperboundRHS}]

We want to eventually use \lemref{stop} to estimate the integral, which we can rewrite as 
\begin{equation}\label{eq:rewriteintegral}\int_{0}^{\infty}\Prp{\mu}{X_t=Y_t,t<\Mbad}\,dt=\sum_{x\in\Omega}\Exp{\mu}{\int_{0}^{\Mbad}\1(X_t=Y_t=x)\,dt}.\end{equation}
This is a sum of terms of the form demanded by \lemref{stop}: just set $(U_t)_{t\geq 0}=(Z_t)_{t\geq 0}$ and $(V_t)_{t\geq 0} = (X_t,Y_t)_{t\geq 0}$. However, other conditions are needed for this lemma to apply, and one of them clearly fails: the distribution of $(X_0,Y_0)$ is  {\em  not the same} as that of $(X_{\Mbad},Y_{\Mbad})$. To see this, simply note that whereas $X_0=Y_0$ under $\mu$ (as we will see below), typically $X_{\Mbad}\neq Y_{\Mbad}$. It turns out that we can circumvent this problem by defining 
\begin{equation}\label{eq:deftau}
\tau = \inf\{t\geq \Mbad: \, X_t=Y_t\}.
\end{equation}
Note that $X_t\neq Y_t$ for all $\Mbad\leq t<\tau$. Therefore
\begin{equation}\int_{0}^{\infty}\Prp{\mu}{X_t=Y_t,t<\Mbad}\,dt=\sum_{x\in\Omega}\Exp{\mu}{\int_{0}^{\tau}\1(X_t=Y_t=x)\,dt}.\end{equation}
Clearly $\tau$ is a stopping time for $(X_t,Y_t,Z_t)_{t\geq 0}$. We claim that $\tau$ and the initial distribution $\mu$ satisfy the conditions $(1)-(3)$ of \lemref{stop}. 

First, $\prstart{\tau>0}{\mu}=1$, since $\prstart{X_0=Y_0\neq Z_0}{\mu}=1$ and $\prstart{\tau\geq \Mbad>0}{\mu}=1$. Hence condition~(1) of Lemma~\ref{lem:stop} is satisfied. Next we show that 
\begin{align}\label{eq:thitstartbound}
\estart{\tau}{\mu}\leq 22t_{\rm hit}^*,
\end{align}
which will imply that condition (2) is also satisfied. 
Recalling that $\tau = \inf\{t\geq \Mbad: \, X_t=Y_t\}$, we have
$$\Exp{\mu}{\tau}\leq \max_{(x,y,z)}\Exp{(x,y,z)}{\Mbad} + \max_{(x',y',z')}\Exp{(x',y',z')}{\Mgood}.$$
The first expectation is at most the meeting time of~$X$ and~$Z$, which is independent of $Y$. \lemref{meetingtime} implies that, conditionally on $Z$, this is at most $11t^{*}_{\rm hit}$ almost surely, so $\Exp{(x,y,z)}{\Mbad} \leq 11t^{*}_{\rm hit}$. Similarly, $\Mgood$ is the meeting time of $X$ and the independent trajectory $Y$, and $\Exp{(x',y',z')}{\Mgood}\leq 11t^{*}_{\rm hit}$.

Note now that by definition $\mu$ is supported on the set $\{(x,x,z): \, x\neq z\}$. One can also check that $\mu$ is invariant under the action of any automorphism $\phi$ of $P$, i.e.\
$\mu(x,y,z) = \mu(\phi(x),\phi(y),\phi(z))$. This together with transitivity give that the marginal of $\mu$ on the first two coordinates is uniform on the diagonal set~$\Delta = \{(x,x)\in \Omega^2: x\in \Omega\}$. 

Similarly the law of $(X_\tau,Y_\tau)$ is invariant under the action of any automorphism $\phi$.
Using transitivity again we obtain that $(X_{\tau}, Y_\tau)$ is uniform on $\Delta$. Therefore, condition~(3) is also satisfied.

We can now apply Lemma~\ref{lem:stop} to get that 
\begin{align*}
\int_{0}^{\infty}\Prp{\mu}{X_t=Y_t,t<\Mbad}\,dt= \sum_{x\in\Omega}\frac{\Exp{\mu}{\tau}}{n^2}\leq \frac{22\,t^{*}_{\rm hit}}{n},
\end{align*}
where the inequality follows from~\eqref{eq:thitstartbound}. This concludes the proof.
\end{proof}

\section{Non transitive chains}

The goal of this section is to prove Theorem~\ref{thm:counter}. 
Throughout the section we fix $\epsilon>0$ and let~$C=6/\epsilon$.  In what follows $K_r$ is the complete graph on~$r\in \N\setminus\{0\}$ vertices. 

For $n\in \N$ construct a graph $G_n$ as follows: 
begin from a clique $K_{n+1}$ and $n$ disjoint copies of $K_k$ with $k=\sqrt{Cn}$. Fix a vertex $v\in K_{n+1}$ and add exactly one edge from $v$ to each copy of $K_k$. See Figure~\ref{fig:graphG} for a depiction of the graph.

\begin{figure}[h!]
\begin{center}
\includegraphics[scale=1]{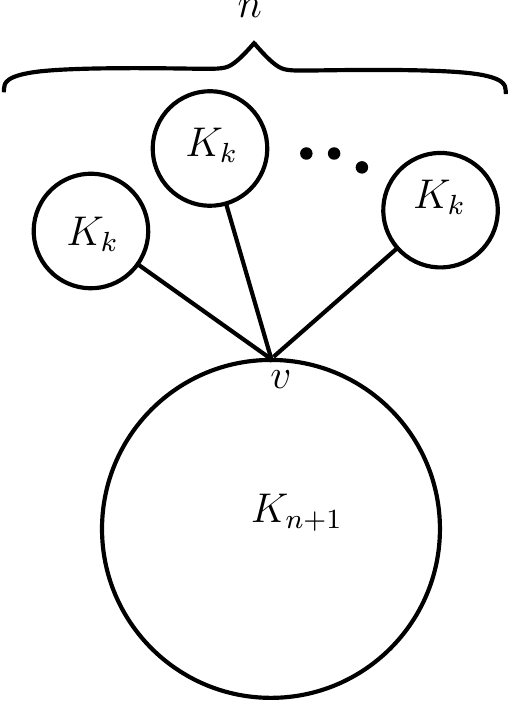}
\caption{\label{fig:graphG}The graph $G$}
\end{center}
\end{figure}

Let $\Omega$ be the vertex set of $G_n$. We call $\text{Down}$ the set of vertices belonging to $K_{n+1}$ and $\text{Up}= \Omega\setminus\text{Down}$ the rest.  

Let $P$ be the transition matrix of a simple random walk over $G$ and $\pi$ its stationary distribution. Let~$X, Y$ and $Z$ be independent random walks starting from $\pi$ with transition matrix $P$ and speeds $\lambda_X=1, \lambda_Y=0$ and $\lambda_Z=1$.

The idea is that by choosing $\epsilon$ sufficiently small, the stationary measure of $\text{Down}$ becomes arbitrarily small.  So if we start $X, Y$ and $Z$ according to $\pi$, then it is very likely they will all start from different cliques in Up. Let $T$ be the $\sqrt{n}$-th time that $X$ visits the vertex $v$. We will show that as~$n\to \infty$ the probability that $X$ and $Z$ collide after time $T$ is arbitrarily small. Moreover, we will show that the probability that $X$ and $Y$ collide before $T$ is arbitrarily small as $n\to \infty$. Combining these two assertions will complete the proof.

For all $r\geq 0$ we define $\tau_v^{(r)}$ to be the time of the $r$-th visit to $v$. Formally,
\[
\tau_v^{(0)} = \inf\{t\geq 0: \,X_t=v\}
\]
and for $i\geq 1$ we define
\[
\tau_v^{(i)} = \inf\{t>\tau_v^{(i-1)}: \, X_t=v, \, X_{t-}\neq v\}.
\]

\begin{lemma}\label{lem:firstalpha}
There exists $\alpha=\alpha(C)>0$ independent of $n$ such that 
for all $x,z\in \Omega$ and all $r\geq 1$ we have 
\[
\prstart{M^{X,Z}> \tau_v^{(r)}}{x,z} \leq (1-\alpha)^{r-1}.
\]
\end{lemma}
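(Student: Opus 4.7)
By the strong Markov property applied at $\tau_v^{(r-1)}$ (and iterating), it suffices to prove there exists $\alpha=\alpha(C)>0$ independent of $n$ such that
\[
\mathbb{P}_{v,z}\bigl(M^{X,Z}\le \tau_v^{(1)}\bigr)\ \ge\ \alpha\qquad\text{for every } z\in\Omega.
\]
Indeed, given the event $\{M^{X,Z}>\tau_v^{(j-1)}\}$, the strong Markov property at $\tau_v^{(j-1)}$ gives $\mathbb{P}(M^{X,Z}>\tau_v^{(j)}\mid \mathcal{F}_{\tau_v^{(j-1)}})= \mathbb{P}_{v,Z_{\tau_v^{(j-1)}}}(M^{X,Z}>\tau_v^{(1)})\le 1-\alpha$, and iterating from $r$ down to $1$ yields $\mathbb{P}_{x,z}(M^{X,Z}>\tau_v^{(r)})\le (1-\alpha)^{r-1}\mathbb{P}(M^{X,Z}>\tau_v^{(1)})\le (1-\alpha)^{r-1}$.

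The quantitative backbone is the meeting rate of two independent continuous-time simple random walks on the clique $K_{n+1}$: at every rate-$1$ jump the moving walker lands on the other with probability exactly $1/n$, so the continuous-time meeting rate is $2/n$. With probability $1/2$, $X$'s first jump from $v$ enters $K_{n+1}\setminus\{v\}$ and initiates a ``Down excursion'' of continuous length $T_X$ with $\mathbb{E}[T_X]=\Theta(n)$ (a Geometric$(1/n)$ number of discrete steps). Hence whenever $X$ and $Z$ jointly lie in $K_{n+1}$ for a time window of length $\Theta(n)$, they meet with probability bounded below by a positive constant.

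\emph{Case A, $z\in\mathrm{Down}\setminus\{v\}$:} $Z$ already lies in $K_{n+1}$ and can exit only through $v$, which from any non-$v$ Down vertex takes continuous time $T_Z$ with $\mathbb{E}[T_Z]=\Theta(n)$. The overlap $T_X\wedge T_Z$ is $\Theta(n)$ with constant probability, and conditional on this the $2/n$ meeting rate delivers a collision with constant probability. \emph{Case B, $z\in\mathrm{Up}$:} a geometric-trial decomposition of $\tau_v^Z$ (successive returns of $Z$ to its clique's portal are $\Theta(k)$ apart, and each portal visit escapes to $v$ with probability $1/k$) shows that $\tau_v^Z$ is a geometric sum with mean $\Theta(k^2)=\Theta(n)$, so its exponential-type tail combined with the exponential-type concentration of $T_X$ yields a uniform lower bound $\mathbb{P}_z(\tau_v^Z\le T_X)\ge \alpha_1=\alpha_1(C)>0$. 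Upon reaching $v$, $Z$ jumps into $K_{n+1}\setminus\{v\}$ with probability $1/2$, after which Case~A applied to the remaining overlap window delivers a meeting with constant probability; multiplying the three constant probabilities gives the required per-excursion bound $\alpha(C)>0$.

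\textbf{Main obstacle.} The principal technical point is the uniform lower bound $\mathbb{P}_z(\tau_v^Z\le T_X)\ge \alpha_1$ in Case~B for $z$ arbitrarily ``deep'' inside an Up clique, since $\tau_v^Z$ and $T_X$ have comparable means of order $n$. The cleanest route is to exploit the geometric-sum representation of $\tau_v^Z$ to obtain a quantitative tail bound and then integrate against the law of $T_X$; a trivial application of Markov's inequality alone does not suffice. Once this single estimate is in place, the remainder of the proof is a routine coordination of the events in the two cases above via the strong Markov property.
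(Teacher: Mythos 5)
Your proposal is correct and takes essentially the same route as the paper: reduce by the strong Markov property to a single excursion of $X$ from $v$ into $K_{n+1}$ (probability $1/2$ of entering, exponentially distributed duration of mean $n$), show that $Z$ reaches $\mathrm{Down}\setminus\{v\}$ within that window with probability bounded below by a constant depending only on $C$, and then win the in-clique race (where the paper gets exactly $1/2$ by the symmetry between the two targets ``$v$'' and ``the other walker'', in place of your rate-$2/n$ window argument). The step you single out as the main obstacle is dispatched in the paper simply by Markov's inequality for $\mathbb{E}_z[\tau_v^Z]=O(k^2)=O(Cn)$ combined with the exact exponential law of the excursion length $T_X$, which gives $\mathbb{P}\bigl(T_X\ge 2\,\mathbb{E}_z[\tau_v^Z]\bigr)\ge e^{-O(C)}$, so the heavier geometric-sum tail bounds you propose are unnecessary.
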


\begin{proof}[\bf Proof]

First note that by the strong Markov property we have for all $r\geq 1$
\begin{align*}
\sup_{x,z}\prstart{M^{X,Z}>\tau_v^{(r)}}{x,z} \leq \sup_{z}\prstart{M^{X,Z}>\tau_v^{(r-1)}}{v,z}.
\end{align*}
Using the strong Markov property again, for $r\geq 1$ we obtain
\begin{align*}
\sup_z \prstart{M^{X,Z}>\tau_v^{(r)}}{v,z} &= \sup_z \prcond{M^{X,Z}>\tau_v^{(r)}}{M^{X,Z}>\tau_v^{(1)}}{v,z}\prstart{M^{X,Z}>\tau_v^{(1)}}{v,z} \\&\leq \sup_w \prstart{M^{X,Z}>\tau_v^{(r-1)}}{v,w} \sup_z\prstart{M^{X,Z}>\tau_v^{(1)}}{v,z}.
\end{align*}
By induction for all $r\geq 1$ this yields
\begin{align*}
\sup_z \prstart{M^{X,Z}>\tau_v^{(r)}}{v,z} \leq \left( \sup_z\prstart{M^{X,Z}>\tau_v^{(1)}}{v,z}\right)^r.
\end{align*}
So we complete the proof by showing that 
\begin{align}\label{eq:newgoal}
\sup_z\prstart{M^{X,Z}>\tau_v^{(1)}}{v,z} \leq 1-\alpha
\end{align}
for a positive constant $\alpha$ depending only on $C$.

Let $\tau=\inf\{t\geq 0: Z_t\in {\rm{Down}}\setminus\{v\}\}$ and fix $w\in {\rm{Down}}\setminus\{v\}$. By symmetry, for all $z$ we then have
\begin{align*}
\prstart{M^{X,Z}\leq \tau_v^{(1)}}{v,z} \geq \frac{1}{2}\cdot \min_z\prstart{\tau\leq \tau_v^{(0)}}{w,z} \min_{a,b\,\in\, {\rm{Down}}\setminus\{v\}}\prstart{M^{X,Z}\leq \tau_v^{(0)}}{a,b},
\end{align*}
where the factor $1/2$ corresponds to the probability that the first time $X$ jumps it goes to ${\rm{Down}}\setminus\{v\}$. 

If $X_0=a \in {\rm{Down}}\setminus \{v\}$, then $\tau_v^{(0)} = \tau_v^{X}$, and hence if also $b\in {\rm{Down}}\setminus \{v\}$, then 
\begin{align*}
\prstart{M^{X,Z}\leq \tau_v^{(0)}}{a,b} \geq \prstart{M^{X,Z}\leq \tau_v^X\wedge \tau_v^Z}{a,b} =\frac{1}{2}.
\end{align*}

It remains to show that for a positive constant $c_1$ we have
\begin{align}\label{eq:goalsecondterm}
\min_z\prstart{\tau\leq \tau_v^{(0)}}{w,z} \geq c_1>0.
\end{align}

If $z\in {\rm{Down}}\setminus\{v\}$, then this probability is $1$ and if $z=v$ it is easily seen to be at least $1/4$. So we assume that $z\in \rm{Up}$. Let~$x$ be the unique neighbour of $v$ lying in the same clique as $z$.
Then the time $\tau$ can be expressed as~$\tau = T_{z,x} + T_{x,v}+T_{v,{\rm{Down}}\setminus\{v\}}$, where the time $T_{r,S}$ stands for the first hitting time of $S$ starting from~$r$. Using this, it is then not hard to see that there exists a positive constant $c$ such that uniformly over all $z\in \rm{Up}$ we have $\E{\tau}\leq c k^2$.
Moreover, if $X_0\in {\rm{Down}}\setminus\{v\}$, then~$\tau_v^{(0)}$ is an exponential random variable with mean $n$.
By Markov's inequality we obtain
\begin{align*}
\prstart{\tau\leq \tau_v^{(0)}}{w,z}  \geq \prstart{\tau\leq 2\E{\tau}}{z} \cdot \prstart{\tau_v^{(0)}\geq 2\E{\tau}}{w} \geq \frac{1}{2}\cdot \int_{2\E{\tau}}^{\infty} ne^{-n s}\,ds = \frac{1}{2} e^{-2n\E{\tau}}.
\end{align*}
Note that this bound does not depend on $z$. Since $k=\sqrt{Cn}$ and $\E{\tau}\leq c k^2$ the bound in~\eqref{eq:goalsecondterm} follows.
\end{proof}

\begin{proof}[\bf Proof of Theorem~\ref{thm:counter}]

We show that for $n$ sufficiently large, the graph $G=G_n$ satisfies the claim of the theorem.

It is not hard to verify that for $n$ large enough, in $G_n$ we have 
\[
\pi({\rm{Down}})\leq \frac{2}{C}.
\]
Let $A$ be the set of pairs $(x,y)$ such that $y\in \text{Up}$ and $x$ is not in the same clique as $y$. Then let~$E=\{(X_0,Y_0)\in A\}$. 
By the preceding bound $\pr{E^c}\leq 3/C= \epsilon/2$ for $n$ large enough.
We then have
\begin{align}\label{eq:uppergood}
\nonumber\pr{\Mgood\leq \Mbad} &\leq \pr{M^{X,Y}\leq M^{X,Z}} = \pr{M^{X,Y}\leq M^{X,Z}, E} + \pr{M^{X,Y}\leq M^{X,Z}, E^c} \\&\leq \sup_{x,y,z: \,(x,y)\in A}\prstart{M^{X,Y}\leq M^{X,Z}}{x,y,z} + \frac{\epsilon}{2}.
\end{align}
Therefore, it suffices to upper bound the last probability appearing above. Fix any $z\in \Omega$ and $(x,y)\in A$. 
For $r$ to be determined later we have
\begin{align}\label{eq:othergood}
\prstart{M^{X,Y}\leq  M^{X,Z}}{x,y,z} \leq \prstart{M^{X,Y}\leq \tau_v^{(r)}}{x,y} + \prstart{M^{X,Z}>\tau_v^{(r)}}{x,z}.
\end{align}
Because $Y$ is not moving, we have $M^{X,Y}=\tau_y$, where $y=Y_0$. Since $x,y$ are not in the same clique of~Up,  if $\tau_y\leq \tau_v^{(r)}$, then there exists $1\leq i\leq r$ such that $\tau_v^{(i-1)}<\tau_y\leq \tau_v^{(i)}$.
By the strong Markov property and union bound we obtain
\begin{align*}
\prstart{M^{X,Y}\leq \tau_v^{(r)}}{x,y} \leq r\prstart{\tau_y\leq \tau_v^{(1)}}{v} \leq \frac{r}{2n},
\end{align*}
since, when $X_0=v$, in order to hit $y\in\rm{Up}$ before returning to $v$,  the first time $X$ moves it must jump into the clique that contains $y$.

Using the above bound and Lemma~\ref{lem:firstalpha} in~\eqref{eq:othergood} we deduce
\begin{align*}
\prstart{M^{X,Y}\leq  M^{X,Z}}{x,y,z} \leq \frac{r}{2n} + (1-\alpha)^{r-1}.
\end{align*}
Taking $r=\sqrt{n}$ or any other function of $n$ that goes to infinity slower than $n$ gives that 
\[
\prstart{M^{X,Y}\leq  M^{X,Z}}{x,y,z} \to 0 \quad \text{as } n\to \infty.
\]
We conclude from~\eqref{eq:uppergood} that
\[
\pr{\Mgood\leq \Mbad} <\epsilon
\]
and this finishes the proof.
\end{proof}

\section{Sharpness of Conjecture~\ref{conj:1} }\label{sec:sharpness}

In this section we describe the example pointed out by Alexander Holroyd, mentioned in the Introduction, of a family of transitive graphs for which $\pr{\Mgood \leq \Mbad}\leq 1/3+\delta$. 
In what follows we take $\lambda_X=\lambda_Y=1$ and $\lambda_Z=0$. 

To construct the example, fix $\eps \in (0,1)$ and consider the chain with state space $\{0,1\}^n$ in which the $j$'th coordinate changes value (from $0$ to $1$ or vice-versa) at rate $q_j = \eps^{j-1}(1-\eps)/(1-\eps^n)$; note that $\sum_{i=1}^n q_i = 1$. The idea is that for small $\eps$, earlier coordinates change state  much more quickly than later coordinates, so the primary obstacle to both meeting and hitting is simply the largest coordinate in which the value differs. For $u,v \in \{0,1\}^n$, let $k(u,v) = \max\{i: u_i \ne v_i\}$, or $k(u,v)=0$ if $u=v$. 

We claim that for $x,y,z \in \{0,1\}^n$, if $k(x,y) > \min(k(x,z),k(y,z))$ then $\prstart{\Mgood < \Mbad}{x,y,z} < 2\eps n$. Assuming this, and taking $\eps=\delta/(2n)$, it follows by symmetry that, starting from stationarity, 
\[
\pr{\Mgood< \Mbad} \le \delta + \pr{k(X_0,Y_0) < \min(k(X_0,Z_0),k(Y_0,Z_0)} < \delta + \frac{1}{3}.
\]
It thus remains to prove the preceding claim. 

Fix $x,y,z \in \{0,1\}^n$ with $k(x,y) > \min(k(x,z),k(y,z))$, and assume by symmetry that $k(x,z) < k(x,y)$. 
For $1 \le k \le n$, let $\tau_k = \min\{t: X_t^{(i)}=z_i,1 \le i \le k\}$ be the first time that $X_t$ and $z$ agree in the first $k$ coordinates. It is convenient to set $\tau_0=0$. 
Also, let $\sigma_k^X = \min\{t: \exists i \ge k,~X_t^{(i)} \ne X_0^{(i)}\}$ be the first time one of the last $n-k+1$ coordinates of $X$ changes, and define $\sigma_k^Y$ accordingly. 

We will show that for all $1 \le k < n$, 
\begin{equation}\label{eq:a_claim}
\pr{\tau_k < \sigma_{k+1}^X} \ge (1-\eps)^k \geq  1-k\eps. 
\end{equation}
Note that $\tau_k$, $\sigma_{k+1}^X$ and $\sigma_{k+1}^Y$ are all independent. Furthermore, $\sigma_{k+1}^X$ and $\sigma_{k+1}^Y$ are identically distributed, so if the preceding inequality holds as written then it also holds with $\sigma_{k+1}^Y$ in place of~$\sigma_{k+1}^X$. We finish proving the claim assuming that (\ref{eq:a_claim}) holds, then conclude by proving (\ref{eq:a_claim}).

At time $\tau_{k(x,z)}$, the first $k(x,z)$ coordinates of $X$ agree with those of $z$. If $\tau_{k(x,z)} < \sigma_{k(x,z)+1}^X$ then the remaining coordinates of $X$ and $z$ also agree (because they did at time $0$ and they have not changed), so $M^{X,Z}=\tau_{k(x,z)}$. Similarly, if $\tau_{k(x,z)} < \sigma_{k(x,z)+1}^X$ and $\tau_{k(x,z)} < \sigma_{k(x,z)+1}^Y$ then $\Mbad < \Mgood$.
It then follows, using (\ref{eq:a_claim}) and the subsequent observation, that  
\[
\prstart{\Mgood < \Mbad}{x,y,z} \le \pr{\sigma_{k(x,z)+1}^X\le \tau_{k(x,z)}} + \pr{\sigma_{k(x,z)+1}^Y\le \tau_{k(x,z)}} \le 2k(x,z)\eps < 2n\eps\,,
\]
as claimed. It thus remains to prove (\ref{eq:a_claim}). In what follows we write $\sigma_k=\sigma_k^X$. 

Fix $1 \le k < n$, and note that $\sigma_k$ is exponential with rate $\sum_{j=k}^n q_j = \eps^{k-1}(1-\eps^{n+1-k})/(1-\eps^n)$. 
Furthermore, $\sigma_k < \sigma_{k+1}$ precisely if the $k$'th coordinate of $X$ changes before any larger coordinate. It follows that $\pr{\sigma_k < \sigma_{k+1}} = q_k/\sum_{j=k}^n q_j$. 

Suppose that $X_0^{(k)}=z_k$. In this case to have $\tau_k < \sigma_{k+1}$ it suffices that $\tau_{k-1} < \sigma_k$, so 
\[
\prcond{\tau_k < \sigma_{k+1}}{X_0^{(k)}=z_k}{} \geq  \pr{\tau_{k-1} < \sigma_k}. 
\]
If $X_0^{(k)}\ne z_k$ then the $k$'th coordinate must change before time $\tau_k$, so to have $\tau_k < \sigma_{k+1}$ it is necessary that $\sigma_k < \sigma_{k+1}$. 
By the strong Markov property, we then have 
\begin{align*}
\prcond{\tau_k < \sigma_{k+1}}{X_0^{(k)}\ne z_k}{} & = \pr{\sigma_k<\sigma_{k+1}}\prcond{\tau_k < \sigma_{k+1}}{X_0^{(k)}=z_k}{} \\
									& = \frac{q_k}{\sum_{j=k}^n q_j} \cdot \prcond{\tau_k < \sigma_{k+1}}{X_0^{(k)}=z_k}{}\, \\
									& \geq  \frac{1-\eps}{1-\eps^{n+1-k}}\pr{\tau_{k-1} < \sigma_k}\, . 
\end{align*}
We thus have the unconditional bound 
\[
\pr{\tau_k < \sigma_{k+1}} \ge \frac{1-\eps}{1-\eps^{n+1-k}} \pr{\tau_{k-1} < \sigma_k} > (1-\eps) \pr{\tau_{k-1} < \sigma_k}, 
\]
This bound holds for all $1 \le k < n$; since $\tau_0=0$ we also have $\pr{\tau_0< \sigma_1}=1$, and (\ref{eq:a_claim}) follows.

\section*{Acknowledgements}

The authors thank the Bellairs Institute, Microsoft Research and the Isaac Newton Institute, where parts of this work were completed.

\bibliographystyle{plain}
\bibliography{biblio}

\end{document}